\pgfplotsset{compat=1.18} 
\definecolor{codegray}{rgb}{0.5,0.5,0.5}
\lstdefinestyle{mystyle}{
    commentstyle=\color{codegreen},
    keywordstyle=\color{magenta},
    numberstyle=\tiny\color{codegray},
    stringstyle=\color{codepurple},
    basicstyle=\ttfamily\footnotesize,
    breakatwhitespace=false,         
    breaklines=true,                 
    captionpos=b,                    
    keepspaces=true,                 
    numbers=left,                    
    numbersep=5pt,                  
    showspaces=false,                
    showstringspaces=false,
    showtabs=false,                  
    tabsize=2
}
  \lstdefinelanguage{GAP}{
    basicstyle=\ttfamily,
    keywords={true, false, function, return, fail, if, in, while, do, od, else, elif, fi, break, continue},
    keywordstyle=\color{blue}\bfseries,
    otherkeywords={
      >, <, ==
    },
    breaklines=true,      
    identifierstyle=\color{black},
    sensitive=True,
    comment=[l]{\#},
    commentstyle=\color{cyan},
    stringstyle=\color{red},
    morestring=[b]',
    morestring=[b]"
  }
\providecommand{\U}[1]{\protect\rule{.1in}{.1in}}
\newcolumntype{Y}{>{\raggedleft\arraybackslash}X}
\def\bn{{\mathbb{N}}}
\def\br{{\mathbb{R}}}
\def\bz{{\mathbb{Z}}}
\def\br{\mathbb R}
\def\vs{\vskip.3cm}
\def\noi{\noindent}
\def\gdeg{G\text{\rm -deg}}
\def\o2mdeg{O(2)^m\text{\rm -deg}}
\def\s1deg{S^1\text{\rm -deg}}
\def\Om{\Omega}
\def\ve{\varepsilon}
\def\sign{\text{\rm sign\,}}
\def\ker{\text{\rm Ker\,}}
\DeclareMathOperator{\id}{Id}
\newcommand\bbR{\ensuremath{\mathbb R}}
  \definecolor{mygreen}{rgb}{0,.66,.05}
  \definecolor{lightyellow}{rgb}{1,1,.80}
\newtheorem{theorem}{Theorem}[section]
\newtheorem{lemma}{Lemma}[section]
\newtheorem{remark}{Remark}[section]
\newtheorem{remark-definition}{Remark and Definition}[section]
\newtheorem{rem-not}{Remark and Notation}[section]
\begin{document}

\title[Non-Radial Solutions to Elliptic Systems]{Unbounded Branches of Non-Radial Solutions to Semilinear Elliptic Systems on a Unit Ball in $\mathbb R^3$ and Their Patterns}

\author[C. Crane --- Z. Ghanem]{Casey Crane --- Ziad Ghanem}

\address
{\textsc{Casey Crane}\\
Department of Mathematical Sciences\\
University of Texas at Dallas\\
Richardson, TX 75080, USA.}
\email{casey.crane@utdallas.edu}

\address
{\textsc{Ziad Ghanem}\\
Department of Mathematical Sciences\\
University of Texas at Dallas\\
Richardson, TX 75080, USA}

\email{ziad.ghanem@utdallas.edu}

\subjclass[2010] {Primary: 35B06; Secondary: 47H11, 35J91}

\keywords{Dirichlet Laplacian; non-radial solutions; equivariant
Brouwer degree}

\begin{abstract}
We investigate symmetry-breaking phenomena in semilinear elliptic systems on the unit ball in $\mathbb{R}^3$, focusing on the emergence of non-radial solution branches with prescribed spatial and internal symmetries. Extending previous scalar results, we develop a framework for systems equivariant under $G := O(3) \times \Gamma \times \mathbb{Z}_2$, where $\Gamma$ is a finite group encoding coupling symmetries. Using the $G$-equivariant Leray--Schauder degree and Burnside ring techniques, we derive computable criteria for the existence of unbounded branches of non-radial solutions and classify their isotropy types. Our approach accommodates non-simple eigenvalue multiplicities and provides explicit bifurcation conditions in terms of spectral resonance between coupling eigenvalues and spherical Laplacian modes. Applications to coupled spherical oscillators illustrate how Platonic symmetries and internal permutations interact to produce complex patterns. These results establish a general method for detecting and characterizing symmetry-breaking in high-dimensional elliptic systems.
\end{abstract}

\maketitle

\setlength{\epigraphwidth}{.9\textwidth}
\epigraph{So much for their passage into one another: I must now speak of their construction... And there is a fifth figure (which is made out of twelve pentagons), the dodecahedron—this God used as a model for the twelvefold division of the Zodiac.}{Plato, Timaeus 54d–55c}
\section{Introduction}
Seeking geometric order and regularity in a changeful world, the ancient Greeks were captivated by patterns in nature. Among these patterns, the sphere held a privileged place: a symbol of perfection and harmony, celebrated not only for its radial symmetry but for its ability to host the most elegant of configurations—the Platonic solids. These polyhedra, with symmetry groups $\mathbb{T}$ (tetrahedral), $\mathbb{O}$ (octahedral), and $\mathbb{I}$ (icosahedral), represent some of the most fundamental non-radial patterns realizable on a spherical domain. The ancient fascination with symmetry, once rooted in philosophical and aesthetic reflections, eventually gave rise to a rigorous mathematical theory. With origins in nineteenth-century crystallographical classifications, symmetry has since evolved into a formal concept expressed through the language of transformation groups \cite{Bruckler}. 
\vs
Today, the symmetry perspective permeates analysis, where one of the most fundamental questions concerns the relationship between the symmetries of a domain and those expressed by the solutions to a differential equation. In the context of elliptic partial differential equations, this line of inquiry was profoundly shaped by the American mathematician James Serrin who, in 1971, considered a problem in potential theory involving an overdetermined boundary condition \cite{Serrin}. Serrin demonstrated that, if a solution to an elliptic equation on a domain with a smooth boundary simultaneously satisfies constant Dirichlet and Neumann conditions, then the domain must be a ball and any solution must be radially symmetric. His proof introduced a moving planes technique, where a plane is moved to a critical position and the maximum principle is used to infer symmetric invariance. Almost a decade later, Gidas, Ni and Nirenberg adapted and expanded this technique to prove that any positive, twice continuously differentiable solution to a Dirichlet problem of the form $\Delta u + f(u) = 0$ on a ball must be radially symmetric \cite{Gidas}. Their work established that, even without an overdetermined boundary condition, the combination of a symmetric domain, solution positivity and regularity is sufficient to guarantee radial symmetry for a wide class of scalar equations. Dancer later demonstrated the remarkable robustness of this result in \cite{Dancer} by adapting the moving planes technique for domains with non-smooth boundaries and weaker solution classes to prove that positivity alone suffices to guarantee radial symmetry, even without the strict regularity assumptions required by Gidas et al. in \cite{Gidas}.
\vs
Together, these foundational results establish a clear paradigm: for a single scalar equation under conditions of positivity, solutions on a ball are forced to inherit the domain's full radial symmetry. This naturally raises the question of what happens when these restrictive hypotheses are relaxed. 
Smoller and Wasserman addressed this question directly in \cite{Smoller}, 
proving that a family of elliptic equations on a ball can only exhibit symmetry breaking when the associated linearized operator develops a nontrivial kernel containing non-radial functions.
Their analysis, which relied on a bifurcation theorem from Vanderbauwhede's monograph \emph{Local Bifurcation and Symmetry} \cite{Vanderbauwhede}, revealed the possibility of entire manifolds of patterned solutions emerging from these bifurcation points. Building on this work and answering a conjecture posed therein, Giovanna Cerami provided further sufficient conditions for symmetry breaking to occur for a similar class of problems in \cite{Cerami}. Cerami's analysis not only confirmed the local existence of bifurcating non-radial solutions but also investigated the global structure of the resulting solution set, showing how branches of non-symmetric solutions might extend away from a bifurcation point.
\vs
While the theory for scalar equations is now well-developed, the situation becomes considerably more interesting when we turn to systems of elliptic equations, where the interplay between multiple solution components can lead to fundamentally new phenomena that have no scalar analog. The coupling between equations may either stabilize radial solutions against symmetry breaking or, conversely, create new mechanisms for bifurcation that are impossible in the one-component setting. Of particular interest are systems where the coupling structure has a prescribed symmetry group. Consider, for example, a configuration of $N$ coupled components whose interactions are governed by a finite group $\Gamma$ acting via permutation on $\br^N$ as follows
\[
\sigma (u_1,\ldots,u_N) := (u_{\sigma(1)}, \ldots, u_{\sigma(N)}), \quad (u_1,\ldots,u_N) \in \br^N, \; \sigma \in \Gamma.
\]
To the best of our knowledge, a comprehensive analysis of the possible non-radial patterns for a general elliptic system on the sphere, accommodating arbitrary coupling symmetries, has not yet been undertaken. As we will demonstrate, this setting allows for an exceptionally rich structure of solutions, where the spatial symmetries, suggested by the Platonic solids, interact with the internal symmetries of the coupling to produce complex, multi-component patterns.
\subsection{The Governing Equations and their Symmetries}
Motivated by these considerations, we propose to model the dynamics of $N$ identical coupled spherical oscillators with the following parameter-dependent system of semilinear elliptic equations on the three-dimensional unit ball $\Omega := \{x \in \br^3 : |x|<1\}$:
\begin{align}\label{eq:system}
    \begin{cases}
        -\Delta u = f(x,u) + A(\alpha)u, \quad u(x) \in \br^N; \\
        u|_{\partial \Om} = 0,
    \end{cases}
\end{align}
where $A:\br \rightarrow L^\Gamma(\br^N)$ is a family of $\Gamma$-equivariant $N \times N$ matrices depending continuously on the parameter $\alpha \in \br$ and $f: \overline{\Om} \times \br^N \rightarrow \br^N$ is a continuous map satisfying the conditions:
\begin{enumerate}[label=($A_\arabic*$)]
\item\label{a1} $f(\gamma x, u) = f(x,u)$ for all $x \in \Om$, $u \in \br^N$ and $\gamma \in O(3)$; 
\item\label{a2} $f(x,\sigma u) = \sigma f(x,u) $ for all $x \in \Om$, $u \in \br^N$ and $\sigma \in \Gamma$; 
\item\label{a3} $f(x, -u) = -f(x,u)$ for all $x \in \Om$ and $u \in \br^N$; 
\item\label{a4} $f(u)$ is $o(|u|)$ as $u$ approaches $0$, i.e.
\[
    \lim_{u \rightarrow 0} \frac{f(x,u)}{|u|} = 0.
\]
\item\label{a5} there exist numbers $a,b >0$ and $c \in (0,1)$ such that
\[
|f(x,u)| < a|u|^c + b, \quad u \in \br^N.
\]
\end{enumerate}
Assumptions \ref{a1}--\ref{a3} ($O(3)$-invariance, $\Gamma$-equivariance and oddness, respectively) imply that the system \eqref{eq:system} admits the symmetry group $G:= O(3) \times \Gamma \times \bz_2$ under the standard action of $O(3)$ on $\br^3$,
the permutation action of $\Gamma$ on $\br^N$, 
and the antipodal $\bz_2$-action $u \rightarrow -u$. Assumption \ref{a4} ensures that the linear part of \eqref{eq:system} near the trivial solution $u \equiv 0$ is given by the matrix $A(\alpha): \br^N \rightarrow \br^N$. Finally, the sublinear growth 
Assumption \ref{a5} is necessary for establishing a priori bounds on solutions to \eqref{eq:system}.
\vs
The zero function $u \equiv 0$ satisfies the boundary value problem \eqref{eq:system} for every parameter value $\alpha \in \br$. Nontrivial solutions to \eqref{eq:system} with $u \not\equiv 0$ may arise via bifurcation from the trivial branch $\{(\alpha,0) : \alpha \in \br\}$. Adopting standard spherical coordinate notation 
$(r,\theta,\phi)$, where $r = |x| \in [0,1]$ is the radius, $\theta \in [0,2\pi]$ is the polar inclination in the $xy$-plane and $\phi \in [0,\pi]$ is the azimuthal angle measured from the positive $z$-axis, these non-zero solutions can be classified according to their spatial symmetries as follows:
\begin{itemize}
\item[(i)] {\bf Radial solutions}, depending only on the radius $r = |x|$, are invariant under all spatial rotations and point reflections, i.e. $u(\gamma x) = u(x)$ for all $\gamma \in O(3)$.
These solutions can be identified using classical methods (e.g. by reducing \eqref{eq:system} to a second order ODE in the variable $r$).
\item[(ii)] {\bf Non-radial solutions}, depending nontrivially on the azimuthal angle $\phi$ and the polar inclination $\theta$, admit isotropy subgroups $G_u \leq G$ not containing the radial subgroup $O(3) \times \bz_1 \times \bz_1$. These solutions might exhibit exotic symmetries which enhance our understanding of the model.
\end{itemize}
Our primary goal is to demonstrate how the $G$-equivariant Leray-Schauder degree can be used to derive practical criteria guaranteeing the existence of unbounded branches of non-radial solutions with prescribed symmetries. 
The emergence of a branch of non-radial solutions from the trivial solution $(\alpha,0)$ is only possible at critical parameter values $\alpha_0 \in \br$ for which the linearization of \eqref{eq:system} about $(\alpha_0,0)$ becomes singular. Singularity occurs under a resonance condition
between the spectrum $\sigma(A(\alpha)) =\{\mu_1(\alpha),\mu_2(\alpha),\ldots, \mu_r(\alpha)\}$ and the positive zeros $\{s_{m,n}\}_{n \in \bn}$ of the spherical Bessel functions of the first kind $\{J_m(x)\}_{m \geq0}$. Specifically, we will find that $\alpha_0 \in \br$ is a critical parameter value for \eqref{eq:system} if there exists an index triple  $(m,n,j)\in \bn \cup \{0\} \times \bn \times \{1,2,\ldots,r\}$ for which $\mu_j(\alpha_0) = s_{m,n}$.
\subsection{Theoretical Framework and a Refined Isotropy Classification}
The effectiveness of the $G$-equivariant Leray-Schauder for detecting the existence of non-radial solutions to a boundary value problem equivalent to \eqref{eq:system} with trivial coupling, i.e. $\Gamma = \bz_1$, at any parameter value $\alpha \in \br$ satisfying $\mu_j(\alpha) \neq s_{m,n}$ for all $(m,n,j) \in \bn \cup \{0\} \times \bn \times \{1,2,\ldots,r\}$ has already been demonstrated by Liu et al. in \cite{Jingzhou}. Under the additional assumption that the matrix $A(\alpha)$ admits only simple eigenvalues, 
a sufficient condition for the existence of solutions admitting an isotropy related to a subgroup $H \leq G$ implying non-radial symmetries is established. Our paper is a natural extension of this framework to the bifurcation problem \eqref{eq:system}, allowing for non-simple eigenvalue multiplicities---an important consideration in bifurcation analysis where multiple eigenvalues might cross zero simultaneously.
\vs
Our approach requires reformulation of the bifurcation problem \eqref{eq:system} in an appropriate functional space $\mathscr H:= H_0^2(\Om; \br^N)$ as a fixed point equation for the nonlinear operator 
\[
\mathscr F:\br \times \mathscr H \rightarrow \mathscr H, \quad \mathscr F(\alpha,u) := u - \mathscr L^{-1}(N_f(j(u)) + A(\alpha) j(u)),
\]
where $\mathscr L: \mathscr H \rightarrow L^2(\Om;\br^N)$ is the Laplacian operator $\mathscr Lu := - \Delta u$, $N_f : L^q(\Om;\br^N) \rightarrow L^2(\Om;\br^N)$ is the superposition operator $N_f(v)(x) := f(x,v(x))$ and $j: \mathscr H \hookrightarrow L^q(\Om;\br^N)$ is the embedding $j(u(x)) := u(x)$, these last two defined for any $q > \max\{1,2c\}$ (cf. Assumption \ref{a5}). Following \cite{Jingzhou},  we note that $(\rm i)$ $\mathscr F$ is a completely continuous $G$-equivariant field (a necessary prerequisite for the application of the $G$-equivariant Leray-Schauder degree), $(\rm ii)$ the operator $\mathscr F$ is differentiable at the origin and $(\rm iii)$ for a sufficiently large $R > 0$ and a sufficiently small $\varepsilon > 0$, the annulus $B_R(0)\setminus B_\varepsilon(0)$ contains every solution to \eqref{eq:system}. Under the additional assumption that $D\mathscr F(\alpha,0): \mathscr H \rightarrow \mathscr H$ is an isomorphism, the existence of nontrivial solutions to \eqref{eq:system} is therefore equivalent to the nontriviality of the topological invariant 
\[
\gdeg(\mathscr F(\alpha),B_R(0)\setminus B_\varepsilon(0)) = (G) - \gdeg(D\mathscr F(\alpha,0), B(\mathscr H)),
\]
where $B(\mathscr H) := \{u \in \mathscr H : \|u\|_{\mathscr H} < 1\}$, $\gdeg$ is the $G$-equivariant Leray-Schauder degree and $(G) \in A(G)$ is the unit element of the Burnside ring (see Appendix \ref{sec:appendix} for definition of the Burnside ring $A(G)$, definition of $G$-admissibility/the set of all admissible $G$-pairs $\mathcal M^G$ and for definition of the $G$-equivariant Leray-Schauder degree $\gdeg:\mathcal M^G \rightarrow A(G)$). In this way, the problem of finding nontrivial solutions to \eqref{eq:system} is recast as the problem of finding non-unit orbit types $(H) \in \Phi_0(G) \setminus \{(G)\}$ appearing nontrivially in the Burnside ring element $\gdeg(D\mathscr F(\alpha,0), B(\mathscr H)) \in A(G)$:
\begin{lemma}\label{lemm:sufficient_condition}
Let $\alpha \in \mathbb{R}$ be such that $D\mathscr F(\alpha,0)$ is an isomorphism. If there exists a \textbf{non-unit} orbit type $(H) \in \Phi_0(G) \setminus \{(G) \}$ for which one has
\[
\operatorname{coeff}^{H}\left( \gdeg(D\mathscr F(\alpha,0), B(\mathscr H)) \right) \neq 0,
\]
then there exists a \textbf{nontrivial} solution $(\alpha, u) \in \br \times \mathscr H$  to \eqref{eq:system} with an isotropy subgroup $G_u \leq G$ satisfying the relation $(G_u) \geq (H)$. 
\end{lemma}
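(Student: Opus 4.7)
The plan is to exploit the degree identity already displayed in the paragraph preceding the lemma, namely
\[
\gdeg(\mathscr F(\alpha), B_R(0)\setminus B_\varepsilon(0)) = (G) - \gdeg(D\mathscr F(\alpha,0), B(\mathscr H)),
\]
together with the existence property of the $G$-equivariant Leray--Schauder degree catalogued in Appendix \ref{sec:appendix}. Since the complete continuity of $\mathscr F$, its differentiability at the origin, and the a priori containment of all solutions in the annulus $B_R(0)\setminus B_\varepsilon(0)$ have already been recorded as (i)--(iii), the left-hand side is a well-defined element of the Burnside ring $A(G)$. If one wishes to re-derive the identity rather than quote it, the standard route is additivity of $\gdeg$ on $B_R(0) = B_\varepsilon(0) \sqcup (B_R(0)\setminus\overline{B_\varepsilon(0)})$, together with two homotopies: on the small ball, Assumption \ref{a4} lets one linearly deform $\mathscr F(\alpha,\cdot)$ to $D\mathscr F(\alpha,0)$ without introducing zeros on $\partial B_\varepsilon(0)$, while on the large ball the sublinear growth in Assumption \ref{a5} provides the a priori bound needed to homotope $\mathscr F(\alpha,\cdot)$ to the identity, yielding $\gdeg(\mathscr F(\alpha), B_R(0)) = (G)$.

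Next I would extract the coefficient at the hypothesised non-unit orbit type $(H)$. Because $(H) \neq (G)$, the unit $(G) \in A(G)$ contributes nothing in degree $(H)$, so taking the $(H)$-coefficient of the displayed identity gives
\[
\operatorname{coeff}^H\bigl(\gdeg(\mathscr F(\alpha), B_R(0)\setminus B_\varepsilon(0))\bigr) = -\operatorname{coeff}^H\bigl(\gdeg(D\mathscr F(\alpha,0), B(\mathscr H))\bigr) \neq 0.
\]
Now invoke the existence property of $\gdeg$: a nonzero coefficient at an orbit type $(H)$ forces the associated $G$-equivariant field to vanish at some point of the admissible region whose isotropy dominates $(H)$. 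Applied to the admissible $G$-pair $(\mathscr F(\alpha), B_R(0)\setminus B_\varepsilon(0))$, this furnishes a point $u \in B_R(0)\setminus B_\varepsilon(0)$ with $\mathscr F(\alpha, u) = 0$ and $(G_u) \geq (H)$. Because the annulus excludes the origin, $u \not\equiv 0$; and because $\mathscr F(\alpha, u) = 0$ is equivalent to \eqref{eq:system} by the reformulation given in the excerpt, the pair $(\alpha, u)$ is the required nontrivial solution.

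The statement itself is essentially a bookkeeping consequence of the equivariant degree, so I do not anticipate a genuine obstacle in the logic—the main subtlety is rather in the justification of the underlying identity. Specifically, one must select $\varepsilon > 0$ and $R > 0$ so that the two homotopies are simultaneously $G$-admissible, i.e.\ so that no solutions of \eqref{eq:system} meet $\partial B_\varepsilon(0)$ or $\partial B_R(0)$ along the deformation parameter. The isomorphism hypothesis on $D\mathscr F(\alpha,0)$ controls the inner radius (via a neighbourhood estimate derived from \ref{a4}), while the outer radius is governed by the sublinear bound \ref{a5} combined with the operator norm of $\mathscr L^{-1}$ and the Sobolev embedding constant of $j$. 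These estimates are standard, and once they are secured the proof reduces to the one-line coefficient argument above.
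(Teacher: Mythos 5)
Your proof is correct and follows the same route the paper takes: you quote the degree identity $\gdeg(\mathscr F(\alpha),B_R(0)\setminus B_\varepsilon(0)) = (G) - \gdeg(D\mathscr F(\alpha,0), B(\mathscr H))$, observe that the unit $(G)$ has zero coefficient at $(H) \neq (G)$, and then apply the existence property of the $G$-equivariant degree to the admissible pair on the annulus, which automatically yields a nonzero solution $u$ since the annulus excludes the origin. This is precisely the argument the paper relies on implicitly in the paragraph preceding the lemma and invokes again when proving Lemma \ref{lemm:sufficient_condition_nonradial}.
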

Alongside \cite{Jingzhou}, we 
leverage the description of the list of all irreducible $G$-representations \\ $\{\mathcal V_{m,j}\}_{m \geq 0, \; j \in \{1,\ldots,r\}}$ provided by Golubitsky in \cite{Golubitsky} (adapted in the standard way from $O(3)$ to the product group $O(3) \times \Gamma \times \bz_2$, see Appendices \ref{app:amalgamated_notation} and \ref{app:max_orbit_types} for more details) to describe the
$G$-isotypic decomposition of $\mathscr H$ and identify the spectrum of the linearized operator $D\mathscr F(\alpha,0)$, which are then used to derive a computational formula for $\gdeg(D\mathscr F(\alpha,0), B(\mathscr H))$ in terms of the Burnside ring product of a finite number of basic degrees $\deg_{\mathcal V_{m,j}} := \gdeg(-\id, B(\mathcal V_{m,j}))$ whose contribution is determined by the isotypic multiplicities $m_j \in \bn$ of the eigenvalues $\mu_j(\alpha)$ and the negative spectrum of $D\mathscr F(\alpha,0)$---which we account for with the index set $\Sigma(\alpha) := \{ (m,n,j) \in \bn \cup \{0\} \times \bn \times \{1,2,\ldots,r\} : \mu_j(\alpha) > s_{m,n} \}$ as follows:
\begin{align} \label{eq:computational_formula_gdegA_intro}
    \gdeg(D\mathscr F(\alpha,0), B(\mathscr H)) \; = \prod\limits_{(m,n,j) \in \Sigma(\alpha)} (\deg_{\mathcal V_{m,j}})^{m_j}.
\end{align}
Extracting specific symmetry information 
from the computational formula
\eqref{eq:computational_formula_gdegA_intro} via Lemma  \ref{lemm:sufficient_condition} requires careful application of analytic tools related to the $G$-equivariant degree, the multiplicative structure of the Burnside ring and the natural ordering relations in the isotropy lattice $\Phi_0(G;\mathscr H\setminus\{0\})$ (cf. Appendix \ref{sec:appendix}). In principle, the coefficients standing next to each orbit type in the basic degree $\deg_{\mathcal V_{m,j}} \in A(G)$ can only be identified via the recurrence formula for the $G$-equivariant Brouwer degree with complete knowledge of the coefficients standing next to orbit types $(K) \in \Phi_0(G; \mathcal V_{m,j} \setminus \{0\}$) satisfying $(K) \geq (H)$ (cf. Appendix \ref{sec:appendix}, Eq. \eqref{eq:RF_bd}). It follows that, in practice, the only non-unit orbit types $(H)$ which are known to appear nontrivially in the basic degree $\deg_{\mathcal V_{m,j}}$ are those which are maximal in the corresponding sublattice $\Phi_0(G; \mathcal V_{m,j} \setminus \{0\})$ and admit an odd-dimensional $H$-fixed point space $\mathcal V_{m,j}^{H}$. Indeed, the coefficient $n_H := \operatorname{coeff}^H(\deg_{\mathcal V_{m,j}})$ associated with any orbit type $(H)$ which appears maximally in $\Phi_0(G; \mathcal V_{m,j} \setminus \{0\})$ is given by the non-recursive relation
\[
n_H = \frac{(-1)^{\dim \mathcal V_{m,j}^H} - 1}{|W(H)|}.
\]
On the other hand, the recurrence formula for multiplication in the Burnside ring (cf. Appendix \ref{sec:appendix}, Eq. \ref{def:recurrence_formula_coefficients_burnside_product}) dictates the behavior of orbit types in the Burnside ring product of two or more basic degrees. In general, only the coefficients associated with \textbf{maximal} orbit types, i.e. orbit types $(H)$ which are maximal in the full isotropy lattice $\Phi_0(G; \mathscr H \setminus \{0\})$, can be readily computed in an arbitrary product of basic degrees, for which the recursive formula simplifies to
\begin{align} \label{eq:product_rule_maximalorbitypes}
    \operatorname{coeff}^H((K) \cdot (H)) =
    \begin{cases}
        1 & \text{ if } (K) = (G); \\
        |W(H)| & \text{ if } (K) = (H); \\
        0 & \text{ otherwise.}
    \end{cases}
\end{align}
In \cite{Jingzhou}, Liu et al. introduce the terminology {\it maximal non-radial} for orbit types which appear as maximal elements of an isotropy sublattice $\Phi_0(G; \mathcal V_{m,j} \setminus \{0\})$ for some $m \in 2\bn - 1$, implicitly assuming that this class of orbit types admits maximality in the full lattice $\Phi_0(G)$ by using the simplified rule \eqref{eq:product_rule_maximalorbitypes} when making their Burnside ring product computations. However, a particular family of orbit types $(D_{2m}^p {}^{D_{2m}^d}\times_{\bz_2} \bz_2)$, which appear maximally in the corresponding isotropy sublattices $\Phi_0(G; \mathcal V_{m,j} \setminus \{0\})$, can be shown to satisfy $(D_{2m}^p {}^{D_{2m}^d}\times_{\bz_2} \bz_2) \leq (D_{2m'}^p {}^{D_{2m'}^d}\times_{\bz_2} \bz_2)$ for an infinite number of $m' \neq m$ 
(see example \cite{Ghanem1} where the relations between analogous orbit types in the isotropy lattice $\Phi_0(O(2) \times \bz_2)$ are explored). On the other hand, in the setting of $\Gamma = \bz_1$, there is a maximal orbit type $(\mathbb O^p {}^{\mathbb T^p}\times_{\bz_2} \bz_2) \in \Phi_0(G; \mathscr H \setminus \{0\})$ present in the isotropy sublattice $\Phi_0(G; \mathcal V_{m,j} \setminus \{0\})$ for some $m \in 2 \bn$ which corresponds to non-radial symmetries. Although the rest of their analysis holds and serves as the basis for the main results derived in this paper, the above considerations motivate a reclassification of $\Phi_0(G)$. For us, an orbit type $(H)$ is said to be {\bf maximal non-radial} if it is \textbf{maximal} and the \textbf{radial} subgroup $O(3) \times \bz_1 \times \bz_1 \leq G$ is not subconjugate to $H$.
\vs
Outfitted with this new terminology, we consider the following refinement of the Lemma \ref{lemm:sufficient_condition}:
\begin{lemma}\label{lemm:sufficient_condition_nonradial}
Let $\alpha \in \mathbb{R}$ be such that $D\mathscr F(\alpha,0)$ is an isomorphism. If there exists a \textbf{maximal non-radial} orbit type $(H) \in \Phi_0(G)$ for which one has
\[
\operatorname{coeff}^{H}\left( \operatorname{\gdeg}(D\mathscr F(\alpha,0), B(\mathscr H)) \right) \neq 0,
\]
then there exists a \textbf{non-radial} solution $(\alpha,u) \in \br \times \mathscr{H}$ to \eqref{eq:system} with an isotropy subgroup $G_u \leq G$ satisfying $(G_u) = (H)$.
\end{lemma}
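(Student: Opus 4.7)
The plan is to derive this refinement directly from Lemma \ref{lemm:sufficient_condition} by exploiting the two additional constraints packaged into the new terminology ``maximal non-radial'': maximality in the full isotropy lattice $\Phi_0(G; \mathscr H \setminus \{0\})$ and the failure of the radial subgroup $O(3) \times \bz_1 \times \bz_1$ to be subconjugate to $H$. Concretely, I would first feed the hypothesis straight into Lemma \ref{lemm:sufficient_condition}: since $(H) \in \Phi_0(G) \setminus \{(G)\}$ and $\operatorname{coeff}^H(\gdeg(D\mathscr F(\alpha,0), B(\mathscr H))) \neq 0$, that lemma produces a nontrivial solution $(\alpha, u) \in \br \times \mathscr H$ whose isotropy satisfies $(G_u) \geq (H)$.

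\vs

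Next, I would sharpen this relation to $(G_u) = (H)$ using maximality. Because $u \neq 0$, the orbit type $(G_u)$ belongs to the sublattice $\Phi_0(G; \mathscr H \setminus \{0\})$; the oddness assumption \ref{a3} is important here, since it forces the only $G$-fixed vector of $\mathscr H$ to be $0$ and therefore rules out $(G_u) = (G)$. Thus $(G_u)$ is a genuine element of the lattice in which $(H)$ is declared maximal, and the inequality $(G_u) \geq (H)$ together with maximality of $(H)$ collapses to the equality $(G_u) = (H)$. The non-radial clause of the definition then delivers the final assertion: because $O(3) \times \bz_1 \times \bz_1$ is not subconjugate to $H$, no $G$-conjugate of the radial subgroup is contained in $G_u$, so $u$ fails to be invariant under the entire $O(3)$-action on $\br^3$. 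By criterion (ii) of the introduction, this means $u$ is a non-radial solution of \eqref{eq:system} with $(G_u) = (H)$, as claimed.

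\vs

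The hard part of this lemma is conceptual rather than technical: one must be scrupulous about the ambient lattice in which ``maximal'' is understood. The argument above requires maximality in the full isotropy lattice $\Phi_0(G; \mathscr H \setminus \{0\})$, not merely in any of the subrepresentation sublattices $\Phi_0(G; \mathcal V_{m,j} \setminus \{0\})$, since otherwise the inequality $(G_u) \geq (H)$ produced by Lemma \ref{lemm:sufficient_condition} could be strict through symmetries arising from other isotypic components. This is precisely the pitfall that the authors' reclassification avoids, and it is the reason the $(D_{2m}^p\,{}^{D_{2m}^d}\!\times_{\bz_2}\! \bz_2)$ family discussed above must be excluded from the list of maximal non-radial orbit types even though it is maximal within each individual isotypic sublattice.
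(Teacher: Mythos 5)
Your proof is correct and takes essentially the same route as the paper's: invoke Lemma \ref{lemm:sufficient_condition} (equivalently, the existence property of the equivariant degree) to obtain $u\neq 0$ with $(G_u)\geq (H)$, rule out $(G_u)=(G)$, and then use maximality of $(H)$ in $\Phi_0(G;\mathscr H\setminus\{0\})$ to collapse the inequality to $(G_u)=(H)$, with non-radiality following because the radial subgroup is not subconjugate to $H$. Your write-up is actually more explicit than the paper's terse two-sentence proof, in particular in spelling out that $\mathscr H^G=\{0\}$ (a consequence of the antipodal $\bz_2$-action that \ref{a3} licenses) and in making the maximality and non-radiality steps visible rather than implicit; the only nitpick is that it is the $\bz_2$-factor of $G$, rather than \ref{a3} directly, that forces $\mathscr H^G=\{0\}$, with \ref{a3} merely justifying the inclusion of that factor in the symmetry group.
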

\begin{proof}
The non-zero coefficient 
standing next to $(H)$
guarantees a nontrivial solution $u \in \mathscr H \setminus \{0\}$ with an isotropy satisfying $G_u \ge H$ by the existence property of the $G$-equivariant Leray-Schauder degree (cf. Appendix \ref{sec:appendix}). On the other hand, since by Lemma \ref{lemm:sufficient_condition} one has $u \not\equiv 0$, it cannot be that $(G_u) = (G)$ and the conclusion follows.
\end{proof}
\vs
\subsection{Organization} The remainder of this paper is organized as follows. In Section \ref{sec:functional_space}, we prepare the bifurcation problem \eqref{eq:system} for application of the $G$-equivariant Leray-Schauder degree by a functional reformulation in the Sobolev space $\mathscr H$ with a nonlinear operator $\mathscr F$ in the form of a $G$-equivariant compact perturbation of identity. In Section \ref{sec:isotypic_decomp}, we consider the $G$-isotypic decomposition of $\mathscr H$, obtain the spectrum of the linearization $D \mathscr F(\alpha,0)$ and derive a computational formula for the degree invariant $\gdeg(D \mathscr F(\alpha,0), B(\mathscr H))$. In Section \ref{sec:existence_result}, we establish our main existence result for non-radial solutions to \eqref{eq:system} at any fixed regular parameter value $\alpha \in \br$ by 
demonstrating how the coefficients associated with any maximal non-radial orbit type $(H)$ in the $G$-degree $\gdeg(D\mathscr F(\alpha,0), B(\mathscr H))$ is determined according to the parity of the number of eigenvalues $\mu_j(\alpha) \in \sigma(A(\alpha))$,
up to isotypic (geometric) multiplicity $m_j$, which dominate the positive zeros $s_{m,n}$ of the Bessel functions of the first kind associated with irreducible $G$-representations $\mathcal V_{m,j}$ admitting odd-dimensional $H$-fixed point spaces, i.e. by the quantity 
\[
\mathfrak m^H(\alpha) := \left| (m,n,j) \in \Sigma(\alpha) :  2 \nmid m_j, \; 2 \nmid \dim\mathcal V_{m,j}^H  \}\right|.
\]
This provides the foundation for our bifurcation analysis in Section \ref{sec:bifurcation_results}, where we prove that a change in parity of $\mathfrak m^H(\alpha)$ as $\alpha$ crosses an isolated critical parameter value $\alpha_0$ guarantees the emergence of a branch of non-radial solutions emerging from the trivial solution branch at $(\alpha_0,0) \in \br \times \mathscr H$ with symmetry $(H)$. In Section \ref{sec:rab_alt}, we present our main global bifurcation result under the additional assumption that our set of critical parameter values is finite:
\begin{theorem}\label{thm:main_global_bif}
Suppose that the set of critical parameters for \eqref{eq:system} can be enumerated 
\[
\{ \alpha_1, \alpha_2, \ldots \alpha_n \} \subset \br,
\]
in such a way that, for any $i < j$, one has $\alpha_i < \alpha_j$. If for any  maximal non-radial orbit type $(H)$ the parities of $\mathfrak m^H(\alpha_1^-)$ and $\mathfrak m^H(\alpha_n^-)$ disagree, then the system \eqref{eq:system} admits an unbounded branch of non-radial solutions emerging from the trivial solution with symmetry $(H)$.
\end{theorem}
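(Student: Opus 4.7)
The plan is to combine the local bifurcation result of Section \ref{sec:bifurcation_results} with a Rabinowitz-type global alternative for the $G$-equivariant Leray--Schauder degree, and to exploit the maximality of $(H)$ in the isotropy lattice to ensure that the resulting unbounded continuum consists of non-radial solutions with the prescribed symmetry $(H)$.

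First I would use the computational formula \eqref{eq:computational_formula_gdegA_intro} together with the product rule \eqref{eq:product_rule_maximalorbitypes} to observe that, for a maximal non-radial orbit type $(H)$, the coefficient $\operatorname{coeff}^H\bigl(\gdeg(D\mathscr F(\alpha,0),B(\mathscr H))\bigr)$ depends only on the parity of $\mathfrak m^H(\alpha)$ and changes value precisely at those critical parameters where this parity flips. Set
\[
I_H := \{i \in \{1,\ldots,n-1\} : \mathfrak m^H(\alpha_i^+) \not\equiv \mathfrak m^H(\alpha_i^-) \pmod 2\}.
\]
Because $\mathfrak m^H$ is constant on each interval $(\alpha_i,\alpha_{i+1})$, the hypothesis that $\mathfrak m^H(\alpha_1^-)$ and $\mathfrak m^H(\alpha_n^-)$ have opposite parities forces $|I_H|$ to be odd and, in particular, $I_H \neq \emptyset$. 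By the local result of Section \ref{sec:bifurcation_results}, each $\alpha_i$ with $i \in I_H$ is a bifurcation point for \eqref{eq:system} from which a continuum $\mathcal C_i \subset \mathbb R \times \mathscr H$ of non-trivial solutions with isotropy $(G_u) \geq (H)$ emanates.

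Next I would apply the equivariant Rabinowitz global alternative, either directly in terms of the $(H)$-coefficient of the $G$-degree or via restriction to the $\mathscr F$-invariant $H$-fixed-point subspace $\mathscr H^H$: each continuum $\mathcal C_i$ is either \textbf{(a)} unbounded in $\mathbb R \times \mathscr H$, or \textbf{(b)} bounded, in which case $\mathcal C_i$ returns to the trivial branch at a finite collection of other critical parameters $(\alpha_j,0)$ with $j \in I_H$, the signed sum of whose $(H)$-coefficient jumps of $\gdeg(D\mathscr F(\cdot,0),B(\mathscr H))$ vanishes. Since $\operatorname{coeff}^H(\gdeg)$ assumes only two values---one per parity class of $\mathfrak m^H$---every jump at $\alpha_i \in I_H$ equals $\pm c$ for a single non-zero $c$, and the vanishing of the signed sum within each bounded $\mathcal C_i$ forces $|I_H \cap \mathcal C_i|$ to be even. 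If \emph{every} $\mathcal C_i$ were bounded, the distinct bounded continua would partition $I_H$ into subsets of even cardinality, making $|I_H|$ even and contradicting its odd parity; hence some $\mathcal C_i$ is unbounded. Finally, because $(H)$ is maximal in $\Phi_0(G;\mathscr H\setminus\{0\})$ and no non-trivial element of $\mathscr H$ can have isotropy $(G)$ (the antipodal $\mathbb Z_2$-factor forces any $G$-fixed element to vanish), every non-trivial solution on $\mathcal C_i$ has isotropy exactly $(H)$; the ``non-radial'' clause in the definition of maximal non-radial then guarantees that these solutions are indeed non-radial, yielding the claimed unbounded branch with symmetry $(H)$.

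The main technical obstacle will be a careful statement and justification of the $G$-equivariant Rabinowitz alternative at the level of a single orbit-type coefficient: one must verify that, for a bounded bifurcating continuum, the local jumps in $\operatorname{coeff}^H(\gdeg)$ at its intersections with the trivial branch genuinely sum to zero. This reduces to identifying $\operatorname{coeff}^H(\gdeg(D\mathscr F(\alpha,0),B(\mathscr H)))$, up to the normalizing factor $|W(H)|$, with the classical Leray--Schauder index on the invariant subspace $\mathscr H^H$, and then invoking the classical Rabinowitz continuation theorem on that subspace.
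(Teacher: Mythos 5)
Your argument follows essentially the same route as the paper: invoke the equivariant Rabinowitz alternative (Theorem \ref{th:Rabinowitz-alt}) and derive a parity contradiction for the $(H)$-coefficient of the accumulated local bifurcation invariants. The paper packages the counting through the telescoping identity $\operatorname{coeff}^H\bigl(\sum_k\omega_G(\alpha_k)\bigr)=\operatorname{coeff}^H\bigl(\gdeg(\mathscr A(\alpha_1^-),B(\mathscr H))\bigr)-\operatorname{coeff}^H\bigl(\gdeg(\mathscr A(\alpha_n^+),B(\mathscr H))\bigr)$ together with the observation that $\mathfrak m^H(\alpha_k^+)=\mathfrak m^H(\alpha_{k+1}^-)$; you instead partition the jump set $I_H$ among the distinct bounded continua. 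These are two phrasings of the same count, and yours is actually more explicit about why boundedness of all branches forces an even parity (a step the paper compresses into ``the result follows with reasoning similar to Theorem \ref{thm:main_local_bif}''). Two remarks. First, the reduction you sketch in the final paragraph---identifying $\operatorname{coeff}^H$, up to $|W(H)|$, with the classical Leray--Schauder index on $\mathscr H^H$ and invoking the scalar Rabinowitz theorem there---is sound but unnecessary: Theorem \ref{th:Rabinowitz-alt} already provides the Burnside-ring-valued alternative, and applying $\operatorname{coeff}^H$ to alternative (b) gives what you need directly. Second, you define $I_H\subset\{1,\dots,n-1\}$ to mirror the stated hypothesis comparing $\mathfrak m^H(\alpha_1^-)$ with $\mathfrak m^H(\alpha_n^-)$; for your partition argument to close one must also account for a possible nonzero $(H)$-jump \emph{at} $\alpha_n$, which could balance an odd number of $I_H$-jumps within a single bounded continuum. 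The paper's own telescoping terminates at $\gdeg(\mathscr A(\alpha_n^+),B(\mathscr H))$, indicating that the intended comparison is $\mathfrak m^H(\alpha_1^-)$ against $\mathfrak m^H(\alpha_n^+)$, i.e. one should take $I_H\subset\{1,\dots,n\}$; with that reading your argument closes cleanly and matches the paper's.
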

Finally, in Section \ref{sec:example}, to demonstrate the versatility of our framework, we consider its application to a system of two spherical oscillators with coupling symmetries $S_2$ modeled by \eqref{eq:system} with the family of symmetric coupling matrices 
\begin{equation}\label{eq:model-A}
A(\alpha)=\zeta(\alpha)(a\,I+b\,P),\qquad P=\begin{pmatrix}0&1\\1&0\end{pmatrix},
\end{equation}
specified by a pair of model variables $a,b\in\br$ and a continuous profile (see Figure \ref{Fig:sigmoid}) given by the sigmoid function 
\begin{align} \label{eq:sigmoid}
    \zeta(\alpha) = \frac{1}{1 + e^{-\alpha}}.
\end{align}
\begin{figure}[htbp]
    \centering
\includegraphics[width=.9\textwidth]{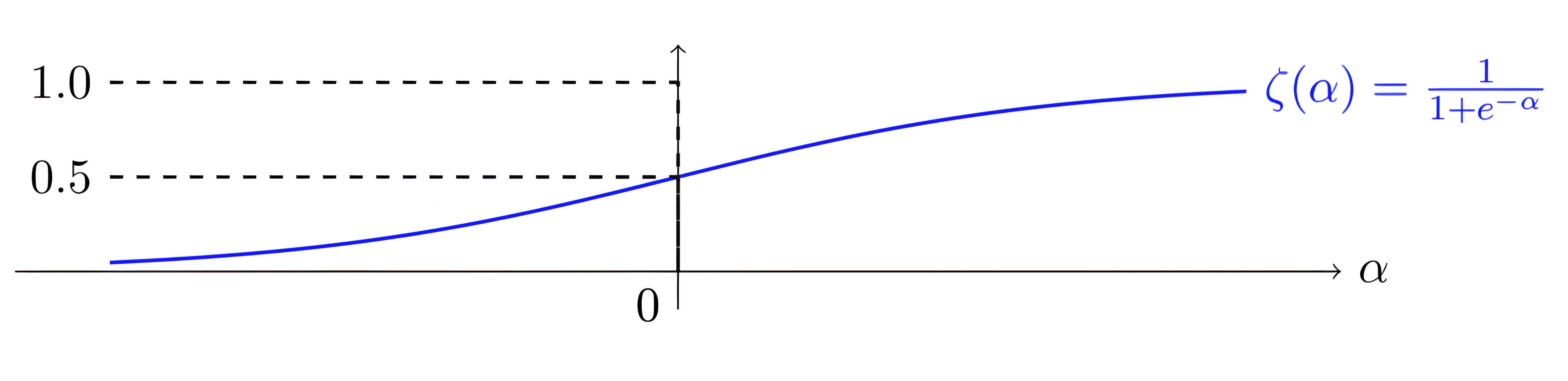}
    \caption{Graph of the sigmoid function $\zeta(\alpha)$.}
    \label{Fig:sigmoid}
\end{figure}
After systematically deriving the set of maximal non-radial orbit types $\{ O(2)^-_j, \mathbb O_j, \mathbb O^-_j, \mathbb O^+_j, \mathbb I_j : j = 0,1 \}$ (see Appendix \ref{app:max_orbit_types}, Table \ref{table:max-orbit-types-abbrev}) in the isotropy lattice $\Phi_0(G)$ and determining the relevant Fourier indices (see Appendix \ref{app:max_orbit_types}, Table \ref{table:max-orbit-types-sequential})
\begin{align*}
	\mathcal{M}_H :=
	\begin{cases}
		\{m \in \bn : 1 \leq m \leq 59 \text{ and } m \text{ is odd} \} & \text{if } H = O(2)^-_j; \\
		 \{3, 7, 9, 11, 13, 17, 21,25, 27, 31, 33, 35, 37, 41, 51, 55, 57, 59\}  & \text{if } H = \mathbb{O}^-_j;  \\
		\{9, 13, 15, 17, 19, 23, 33, 37, 39, 41, 43, 47, 57, \dots\}  & \text{if } H = \mathbb{O}_j; \\
		\{6, 10, 12, 14, 16, 20, 30, 34, 36, 38, 40, 44, 54, 58, 60\} & \text{if } H = \mathbb{O}^+_j; \\
		\{15, 21, 25, 27, 31, 33, 35, 37, 39, 41, 43, 47, 49, 53, 59\}  & \text{if } H = \mathbb{I}_j,
	\end{cases}
\end{align*}
for which these orbit types appear in the sublattices $\Phi_0(G; \mathscr H_{m,j})$, we apply Theorem \ref{thm:main_global_bif} to obtain the following practical criterion for global bifurcation of non-radial, patterned solution branches to \eqref{eq:system}:
\begin{theorem}\label{thm:exampleo2}
Let $(H)$ be any maximal non-radial orbit type from the selection
\[
H \in \{ O(2)^-_j, \mathbb O_j, \mathbb O^-_j, \mathbb O^+_j, \mathbb I_j : j = 0,1 \}.
\]
The system \eqref{eq:system} with the family of coupling matrices $A(\alpha) : \br^2 \rightarrow \br^2$ given by \eqref{eq:model-A} admits an unbounded branch of non-radial solutions with symmetry $(H)$ if the quantity
\[
\sum_{\substack{m \equiv k \pmod{60} \\ \text{for some } k \in \mathcal M_H }} \bigg|\{n\ge1:s_{m,n}<a+ (-1)^jb\}\bigg|,
\]
is an \textbf{odd number}.
\end{theorem}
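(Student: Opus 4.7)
The plan is to apply Theorem \ref{thm:main_global_bif} to the coupling family \eqref{eq:model-A} and reduce its parity-change hypothesis to the arithmetic condition displayed in Theorem \ref{thm:exampleo2}. First, I would perform the spectral analysis of $A(\alpha)$: because $P$ is the swap matrix with eigenvectors $(1,1)$ and $(1,-1)$, the eigenvalues factor as $\mu_j(\alpha)=\zeta(\alpha)(a+(-1)^j b)$ for $j\in\{0,1\}$, each geometrically simple, so the isotypic multiplicities are $m_0=m_1=1$. The two eigenspaces correspond respectively to the trivial and sign $\Gamma=S_2$-representations, so for each maximal non-radial orbit type $(H)$ in the list $\{O(2)^-_j,\mathbb O_j,\mathbb O^-_j,\mathbb O^+_j,\mathbb I_j\}$ the subscript $j$ precisely identifies which of the two isotypic components $\mathcal V_{m,0}$ or $\mathcal V_{m,1}$ carries a non-trivial $H$-fixed point space.

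Next I would verify finiteness of the critical parameter set. A parameter $\alpha_0$ is critical exactly when $\zeta(\alpha_0)(a+(-1)^j b)=s_{m,n}$ for some triple $(m,n,j)$, and since $\zeta:\br\to(0,1)$ is a continuous strictly increasing bijection this equation admits at most one solution---existing iff $0<s_{m,n}<a+(-1)^j b$. Because $s_{m,n}\to\infty$ as $m+n\to\infty$, only finitely many such triples exist, yielding the monotonically-ordered finite enumeration $\{\alpha_1<\cdots<\alpha_n\}\subset\br$ demanded by Theorem \ref{thm:main_global_bif}.

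For the parity count itself, simplicity of every eigenvalue makes the condition $2\nmid m_j$ in the definition of $\mathfrak m^H(\alpha)$ automatic, so only triples with $\dim\mathcal V_{m,j}^H$ odd contribute. The representation-theoretic data in Appendix \ref{app:max_orbit_types} identifies the residue classes modulo $60=|\mathbb I|$ for which $\dim\mathcal V_{m,j}^H$ is odd as exactly the set $\mathcal M_H$, with contributions limited to the $j$ encoded in the subscript of $H$. Once $\alpha$ has crossed every critical parameter, $\mathfrak m^H(\alpha)$ therefore stabilizes at
\[
\sum_{\substack{m\equiv k\pmod{60}\\ k\in\mathcal M_H}}\bigg|\{n\ge 1:\;s_{m,n}<a+(-1)^j b\}\bigg|,
\]
whereas $\mathfrak m^H(\alpha_1^-)=0$ because $\mu_j(\alpha)\to 0$ as $\alpha\to-\infty$ forces $\Sigma(\alpha)=\emptyset$ to the left of $\alpha_1$. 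If the displayed sum is odd the two parities disagree and Theorem \ref{thm:main_global_bif} delivers the claimed unbounded branch of non-radial solutions with isotropy $(H)$.

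The hard part is justifying the asserted residue-class description of $\mathcal M_H$, i.e.\ translating the dimension formulas for $O(3)$-irreducible fixed-point spaces under each of the five maximal non-radial subgroup types into the concrete modulo-$60$ tables of Appendix \ref{app:max_orbit_types}. Everything else---finiteness of critical parameters, the two asymptotic evaluations of $\mathfrak m^H$, and the reduction of the ambient parity formula to the single-$j$ sum---follows quickly from simplicity of the spectrum of $A(\alpha)$ and monotonicity of $\zeta$.
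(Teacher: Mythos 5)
Your proposal is correct and follows the same route as the paper: specialize the spectral data $\mu_j(\alpha)=\zeta(\alpha)(a+(-1)^jb)$ to reduce $\mathfrak m^H(\alpha)$ to a sum of Bessel-root counts over the index set $\mathcal M_H$ read off from Table~\ref{table:max-orbit-types-sequential}, evaluate the two asymptotic parities ($\mathfrak m^H\to 0$ as $\alpha\to-\infty$ and $\mathfrak m^H\to$ the displayed sum as $\alpha\to+\infty$), and invoke Theorem~\ref{thm:main_global_bif}. You even make explicit the finiteness of the critical set (via strict monotonicity of $\zeta$ and $s_{m,n}\to\infty$), which the paper's proof leaves implicit; otherwise the arguments coincide.
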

\section{Functional Space Reformulation} \label{sec:functional_space}
Following \cite{Jingzhou}, we consider the Sobolev space $\mathscr H:=H_{0}^2(\Om,\br^N)$ equipped with the usual norm $\|u\|:=\max\{\|D^s u\|_{L^2}: |s|\le 2\}$, where $s:=(s_1,s_2,s_3)$ is a multi-index and $D^s$ is the partial derivative operator with respect to Cartesian coordinates $(x,y,z) \in \br^3$. Functions in $\mathscr H$ can be represented using a basis derived from the eigenfunctions of the 
spherical Laplacian operator 
\[
\mathscr L:\mathscr H\to L^2(\Om;\br^N), \quad \mathscr Lu:=-\triangle u,
\]
which admits the spectrum $\sigma(\mathscr L) := \{s_{m,n} : m = 0,1,\ldots, \; n = 1,2,\ldots \}$
and the corresponding eigenspaces (see Figure \ref{fig:eigenfunctions})
\begin{align}\label{def:space_Emn}
 \mathscr E_{m,n} := \left\{  J_m(r \sqrt{s_{m,n}})\left(\cos(k \theta)P_m^k(\cos(\phi))a_k + \sin(k \theta)P_m^k(\cos(\phi))b_k\right) : 0 \leq k \leq m, \; a_k,b_k \in \br^N \right\},   
\end{align}
where $J_m:\br \rightarrow \br$ is the $m$-th spherical Bessel function of the first kind, $\sqrt{s_{m,n}}$ is its $n$-th positive zero, $P_m: \br \rightarrow \br$ is the $m$-th Legendre polynomial and, for each $0 \leq k \leq m$, the notation $P_m^k$ is used to indicate the polynomial $P_m^k(x):= (1-x^2)^{k/2} \partial_x^k P_m(x)$. 
Specifically, every function $u \in \mathscr H$ admits a Fourier expansion of the form
\[
u(r,\theta, \phi) = \sum_{m=0}^\infty \sum_{n = 1}^{\infty} \sum_{k = 0}^{m} J_m(r \sqrt{s_{m,n}})\left(\cos(k \theta)P_m^k(\cos(\phi))a_{m,n,k} + \sin(k \theta)P_m^k(\cos(\phi))b_{m,n,k}\right), \quad 
\]
where $a_{m,n,k},b_{m,n,k} \in \br^N$ and $b_{0,n,0} = 0$ for all $n \in \bn$.
\begin{figure}[H]
    \centering
    \includegraphics[width=\linewidth]{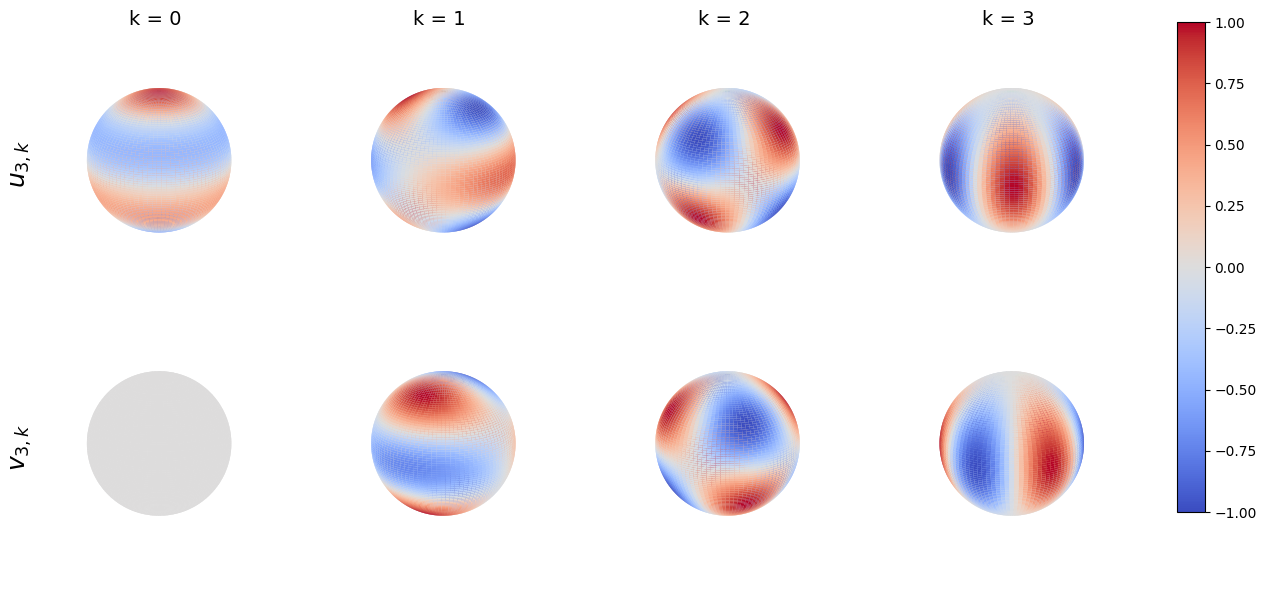}
    \caption{Visualization of the normalized angular eigenfunctions $u_{3,k}(\phi,\theta):= P_3^k(\cos\phi)\cos k\theta$ and $v_{3,k} := P_3^k(\cos \phi)\sin k\theta$ for $k=0,1,2,3$.}
    \label{fig:eigenfunctions}
\end{figure}
Choosing $q > \max\{1,2\nu\}$, we also consider the Nemytski operator $N:L^q(\Om;\bbR^N) \to L^2(\Om;\bbR^N)$ given by $ N(v)(z) := f(z,v(z))$ and the Sobolev embedding $j:\mathscr H\to L^q(\Om;\br^N)$ given by $j(u(z)) := u(z)$. Since $j$ is compact, $N$ is continuous, and $\mathscr L$ is a linear isomorphism, the family of operators
\begin{align}\label{def:operator_F}
    \mathscr F: \br \times \mathscr H \rightarrow \mathscr H, \quad \mathscr F(\alpha,u) := u - \mathscr L^{-1}(N(j(u)) + A(\alpha)j(u)),
\end{align}
defines a compact perturbation of identity for every parameter value $\alpha \in \br$. Notice also that the boundary value problem \eqref{eq:system} is equivalent to operator equation
\begin{align}\label{eq:operator_equation}
    \mathscr F(\alpha,u) = 0, 
\end{align}
in the sense that a function $u \in \mathscr H$ satisfies \eqref{eq:system} for a particular parameter value $\alpha \in \br$ if and only if $(\alpha,u) \in \br \times \mathscr H$ is a solution to \eqref{eq:operator_equation}. In what follows, we will indicate by 
$\mathscr A:= D \mathscr F(\alpha,0) : \mathscr H \rightarrow \mathscr H$ the linearization of \eqref{def:operator_F} about the trivial solution $(\alpha,0) \in \br \times \mathscr H$ which (cf. Lemma $2.2$ in \cite{Jingzhou}) is given by  
\begin{align} 
\label{def:operator_A}
\mathscr A(\alpha)u := u - \mathscr L^{-1} A(\alpha) u, \quad \alpha \in \br, \; u \in \mathscr H.
\end{align}

\section{The $G$-Isotypic Decomposition of $\mathscr H$} \label{sec:isotypic_decomp}
Assuming that a complete list of the irreducible $\Gamma$-representations $\{ \mathcal V_j \}_{j=1}^r$ is made available, we denote by $\{ \mathcal V_j^- \}_{j=1}^r$
the corresponding list of irreducible $\Gamma \times \mathbb{Z}_2$-representations, where the superscript is meant to indicate that each irreducible $\Gamma$-representation has been equipped with the antipodal $\mathbb{Z}_2$-action ($v \mapsto - v$). As a $\Gamma$-representation, $V = \mathbb{R}^N$ is also a natural $\Gamma \times \mathbb{Z}_2$-representation with a $\Gamma \times \mathbb{Z}_2$-isotypic decomposition of the form
\begin{align*}
  V = V_1 \oplus V_2 \oplus \cdots \oplus V_r,  
\end{align*}
where each $\Gamma \times \mathbb{Z}_2$-isotypic component $V_j$ is modeled on the irreducible $\Gamma \times \mathbb{Z}_2$-representation $\mathcal V_j^-$ in the sense that $V_j$ is equivalent to the direct sum of some number of copies of $\mathcal V_j^-$, i.e. $V_j \simeq \mathcal V_j^- \oplus \cdots \oplus \mathcal V_j^-$.
The exact number of irreducible $\Gamma$-representations $\mathcal V_j^-$ `contained' in the $\Gamma \times \mathbb{Z}_2$-isotypic component $V_j$ is called the \textit{$\mathcal V_j^-$-isotypic multiplicity} of $V$ and is calculated according to the ratio $m_j:= \dim V_j / \dim \mathcal{V}_j^-, \quad j \in \{1,2,\ldots,r\}$.

Notice that each of the eigenspaces \eqref{def:space_Emn} admits an induced $\Gamma \times \bz_2$ decomposition 
\[
\mathscr E_{m,n} = \bigoplus_{j=1}^r \mathscr E_{m,n,j}, \; \mathscr E_{m,n,j} := \left\{  J_m(r \sqrt{s_{m,n}})(u_{m,k}(\phi,\theta)a_k + v_{m,k}(\phi,\theta)b_k) : 0 \leq k \leq m, \; a_k,b_k \in V_j \right\},
\]
where $u_{m,k}(\phi,\theta):=  P_m^k(\cos \phi ) \cos k \theta$ and  $v_{m,k}(\phi,\theta):= P_m^k(\cos \phi ) \sin k\theta$. To simplify our computations, we introduce an additional condition on the family of matrices $A: \br \rightarrow L^\Gamma(\br^N)$:
\begin{enumerate} [label=($A_0$)] 
    \item\label{a0} For each $j \in \{1,2, \ldots, r\}$ there exists a continuous map $\mu_j: \mathbb{R} \rightarrow \mathbb{R}$ with
    \[
    A_j(\alpha) = \mu_j(\alpha) \operatorname{id}|_{V_j}, \quad A_j(\alpha) := A(\alpha)|_{V_j}: V_j \rightarrow V_j.
    \]
\end{enumerate}
On the other hand, the list of all irreducible $O(3)$-representations consists of the trivial representation $\mathcal W_0 \simeq \br$ on which $O(3)$ acts trivially and, for each $m \in \bn$, the $(2m + 1)$-dimensional representation $\mathcal W_m$, equivalent to the space of harmonic homogeneous polynomials of degree $m$ equipped with the $O(3)$-action $(\gamma p)(v) := p(\gamma^{-1} v)$ for each $p \in \mathcal W_m$, $v = (x,y,z)^T \in \br^3$ and $\gamma \in O(3)$.
At this point, every irreducible $G$-representation can be uniquely identified with an index pair $(m,j) \in \bn \cup \{0\} \times \{1,\ldots,r\}$ via the notation 
\[
\mathcal V_{m,j} := \mathcal W_m \otimes \mathcal V_j^-.
\]
Alongside Golubitsky in \cite{Golubitsky}, we notice that, for any fixed $m \in \bn$, a 
surface harmonic of the form $\sum_{k=0}^m\cos(k \theta)P_m^k(\cos(\phi))a_k + \sin(k \theta)P_m^k(\cos(\phi))b_k$ with $a_k,b_k \in \br$, when multiplied by the factor $r^m$ and expressed in Cartesian coordinates, becomes a homogeneous harmonic polynomial of degree $m$, such that one has the equivalence
\[
\mathscr E_{m,n,j} \simeq \mathcal V_{m,j}, \quad m = 0,1,\ldots, \; n = 1,2,\ldots, \; j = 1,2,\ldots,r.
\]
Consequently, $\mathscr H$ admits the $G$-isotypic decomposition 
\[
\mathscr H = \overline{\bigoplus_{m=0}^\infty \mathscr H_{m,j}}, \quad \mathscr H_m := \overline{\bigoplus_{n=1}^\infty \mathscr E_{m,n,j}},
\]
where the closure is taken in $\mathscr H$ and each isotypic component $\mathscr H_{m,j}$ is modeled on the corresponding irreducible $G$-representation $\mathcal V_{m,j}$. By Schur's Lemma, the family of $G$-equivariant linear operators $\mathscr A:\br \times \mathscr H \rightarrow \mathscr H$ respects
the $G$-isotypic decomposition in the sense that, for every $\alpha \in \br$, one has $\mathscr A(\alpha)(\mathscr H_{m,j}) \subset \mathscr H_{m,j}$.
It follows that $\mathscr A(\alpha):\mathscr H \rightarrow \mathscr H$ admits the block-matrix decomposition
\[
\mathscr A(\alpha) = \bigoplus_{m=0}^\infty \bigoplus_{j=1}^r  \mathscr A_{m,j}(\alpha), \quad \mathscr A_{m,j}(\alpha):= \mathscr A(\alpha)|_{\mathscr H_{m,j}}: \mathscr H_{m,j} \rightarrow \mathscr H_{m,j}.
\]
Under Assumption \ref{a0}, we are also guaranteed that $\mathscr A(\alpha)$ admits the following spectral decomposition 
\[
\sigma(\mathscr A(\alpha)) = \bigcup_{m=0}^\infty  \sigma(\mathscr A_{m,j}(\alpha)), \quad \sigma(\mathscr A_{m,j}(\alpha)) = \left\{ 1 - \frac{\mu_j(\alpha)}{s_{m,n}} :  n \in \bn \right\},
\]
such that every eigenvalue of $\mathscr A(\alpha)$ can be identified with an element of the index set $\Sigma := \left\{(m,n,j) : m \in \bn \cup \{0\}, \; n \in \bn, \; j \in \{1,2,\ldots,r\} \right\}$ as follows:
\begin{align}
    \mu_{m,n,j}(\alpha):= 1 - \frac{\mu_j(\alpha)} {s_{m,n}} = \frac{s_{m,n} - \mu_j(\alpha)}{s_{m,n}}, \quad (m,n,j) \in \Sigma.
\end{align}

This also shows that the action of a maximal non-radial orbit type on $\mathscr H$ can be viewed in terms of its action on particular $G$-isotypic components, where it must fix some non-trivial vector in some $\mathscr E_{m,n,j}$. Using the geometric properties of the maximal non-radial orbit types, such fixed points can be computed explicitly, and representative examples of such fixed points for a particular choice of $\Gamma$ can be seen in Figure \ref{fig:representative_solutions}.

\begin{figure}[H]
    \centering
    \includegraphics[width=\linewidth]{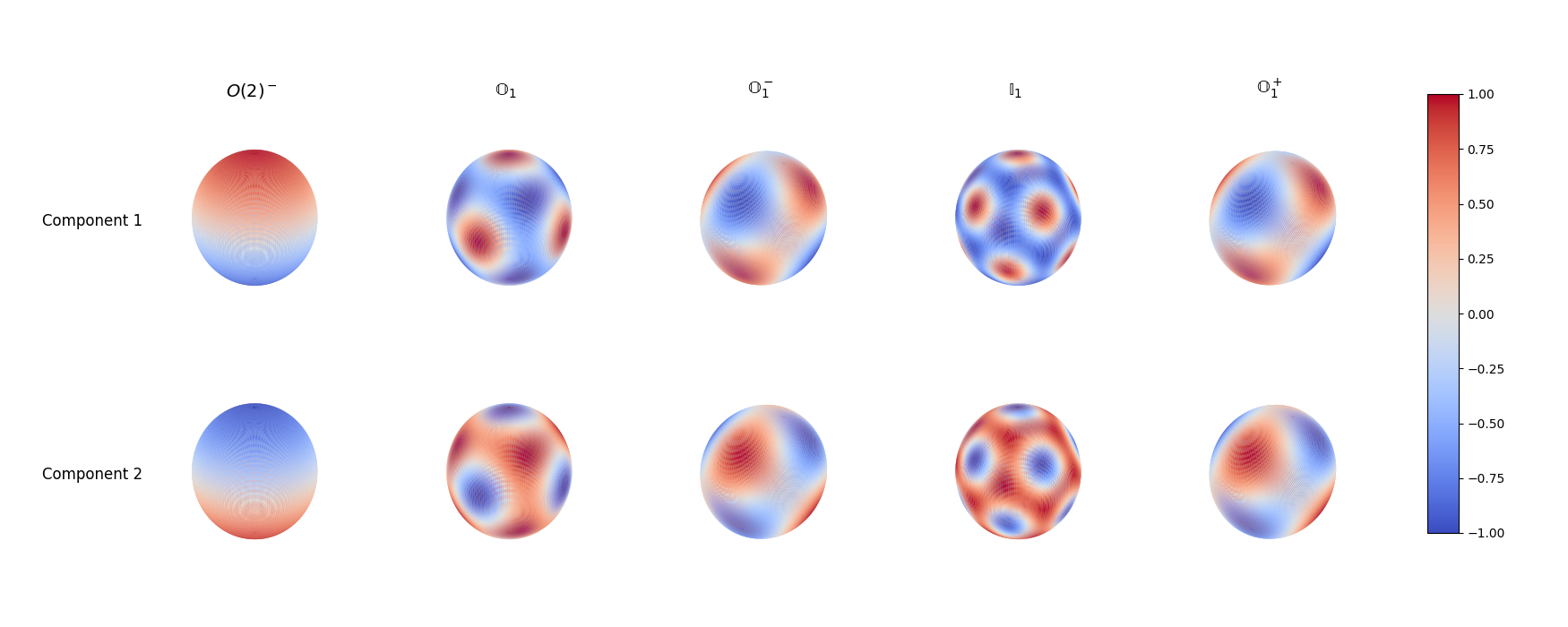}
    \caption{Representative solutions admitting 
    maximal non-radial symmetries
    for the antiphase ($j=1$) isotypic component. These patterns are constructed as linear combinations of the normalized angular eigenfunctions $u_{m,k}(\theta,\phi) :=  P_m^k(\cos \phi)\cos k\theta$: $O(2)^-$ fixes $u_{1,0}$, $\mathbb O_1$ fixes $ u_{4,0} + \sqrt{{10}/{7}} u_{4,4}$, $\mathbb O_1^-$ and $\mathbb O_1^+$ fix $u_{3,0} + \sqrt{{8}/{5}} u_{3,3}$, and $\mathbb I_1$ fixes $ u_{6,0} + \sqrt{{28}/{11}} u_{6,5}$. Note that $\mathbb O_1^-$ and $\mathbb O_1^+$ fix the same subspace through different group actions (on odd and even Fourier modes, respectively).
    For the in-phase case ($j=0$), solutions with the same spatial symmetries exist where the patterns on both spheres are identical.}
    \label{fig:representative_solutions}
\end{figure}

\begin{remark} \label{rm:critical_values} \rm
A trivial solution $(\alpha_0,0) \in \br \times \br \times \mathscr H$ belongs to the critical set of \eqref{eq:operator_equation} if and only if $0 \in \sigma(\mathscr A(\alpha_0))$. In other words, $(\alpha_0,0) \in \Lambda$ if and only if 
there exists an index triple $(m,n,j) \in \Sigma$
for which $\mu_j(\alpha_0) = s_{m,n}$.
\end{remark}
\section{Existence Results For \eqref{eq:system}}\label{sec:existence_result}
Given any parameter value $\alpha \in \br$ for which $\mathscr A(\alpha):\mathscr H \rightarrow \mathscr H$ is an isomorphism, the product property of the $G$-equivariant Leray-Schauder degree (cf. Appendix \ref{sec:appendix}) permits us to express the degree $\gdeg(\mathscr A(\alpha), B(\mathscr H))$ in terms of a Burnside ring product of the $G$-equivariant Leray-Schauder degrees of the Fourier mode restrictions $\mathscr A_{m,j}(\alpha): \mathscr H_{m,j} \rightarrow \mathscr H_{m,j}$ of the $G$-equivariant linear isomorphism $\mathscr A: \mathscr H \rightarrow \mathscr H$ to the open unit balls $B(\mathscr  H_{m,j}):= \{ u \in \mathscr H_{m,j} \; : \; \| u \|_{\mathscr H} < 1 \}$ as follows
\begin{equation}\label{eq:gdegA_product_property_decomp}
  \gdeg(\mathscr A(\alpha), B(\mathscr H)) = \prod\limits_{m=0}^\infty \prod\limits_{j=1}^r \gdeg( \mathscr A_{m,j}(\alpha), B(\mathscr H_{m,j})).  
\end{equation}
On the other hand, each $\gdeg( \mathscr A_{m,j}, B(\mathscr H_{m,j}))$ is fully specified by the corresponding negative spectra $\sigma_-(\mathscr A_{m,j}(\alpha)) := \{ \mu_{m,n,j}(\alpha) < 0 : n \in \bn, \; j \in \{1,2,\ldots,r\}\}$ according to the formula
\begin{equation}\label{eq:negative_spectrum_gdegA}
\gdeg(\mathscr A_{m,j}, B( \mathscr H_{m,j})) = \prod_{\mu_{m,n,j}(\alpha) \in \sigma_-(\mathscr A_{m,j}(\alpha))} (\deg_{\mathcal V_{m,j}})^{m_j},
\end{equation}
where $\deg_{\mathcal V_{m,j}} \in A(G)$ is the basic degree associated with the irreducible $G$-representation $\mathcal V_{m,j}$ (cf. Appendix \ref{sec:appendix}) and $(G) \in A(G)$ is the unit element of the Burnside ring. 
Putting together \eqref{eq:gdegA_product_property_decomp} and \eqref{eq:negative_spectrum_gdegA} and adopting the notation 
\begin{align}
\label{def:index_set_Sigma}
   \Sigma(\alpha) := \left\{ (m,n,j) \in \Sigma :  \mu_{m,n,j}(\alpha) < 0 \right\},
\end{align}
to account for the negative spectrum of $\mathscr A(\alpha)$, one has
\begin{align} \label{eq:computational_formula_gdegA}
    \gdeg(\mathscr A(\alpha), B(\mathscr H)) \; = \prod\limits_{(m,n,j) \in \Sigma(\alpha)} (\deg_{\mathcal V_{m,j}})^{m_j}.
\end{align}
For any parameter value $\alpha \in \br$ and maximal orbit type $(H) \in \Phi_0(G; \mathscr H\setminus\{0\})$, we put
\begin{align}\label{def:crossing_number}
        \mathfrak m^H(\alpha) := \left|\{(m,n,j) \in \Sigma(\alpha) :  2 \nmid  \dim\mathcal V_{m,j}^H, \; 2 \nmid m_j \}\right|.
\end{align}
We now have all the necessary tools to describe a practical rule for the nontriviality of any maximal orbit type in the Burnside ring product \eqref{eq:computational_formula_gdegA}.
\begin{theorem}
For any parameter value $\alpha \in \br$ at which $\mathscr A(\alpha)$ is an isomorphism and for every maximal orbit type $(H) \in \Phi_0(G; \mathscr H\setminus\{0\})$, one has
\begin{align}\label{eq:H_coeffrule_regularpoint}
\operatorname{coeff}^{H}\left( \gdeg(\mathscr A(\alpha) , B(\mathscr H)) \right) = \begin{cases}
    n_H & \text{ if } \mathfrak m^H(\alpha) \text{ is  odd}; \\
    0 & \text{ if } \mathfrak m^H(\alpha) \text{ is  even},
\end{cases}
\end{align}
where $\mathfrak m^H(\alpha)$ is given by \eqref{def:crossing_number} and
\begin{align*}
    n_H := \begin{cases}
        1 & \text{ if } |W(H)| = 2; \\
        2 &\text{ if } |W(H)| = 1.
    \end{cases}
\end{align*} 
\end{theorem}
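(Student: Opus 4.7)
The plan is to evaluate the coefficient $\operatorname{coeff}^H$ of the Burnside-ring product \eqref{eq:computational_formula_gdegA} by combining the non-recursive basic-degree formula with the simplified multiplication rule \eqref{eq:product_rule_maximalorbitypes}, and then performing a direct induction on the number of factors.

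First, I would observe that because $(H)$ is maximal in the full isotropy lattice $\Phi_0(G; \mathscr H \setminus \{0\})$, it is \emph{a fortiori} maximal in every sublattice $\Phi_0(G; \mathcal V_{m,j} \setminus \{0\})$ in which it appears, so the recurrence formula for basic degrees cited in the introduction applies and gives
\[
\operatorname{coeff}^H(\deg_{\mathcal V_{m,j}}) = \frac{(-1)^{\dim \mathcal V_{m,j}^H} - 1}{|W(H)|},
\]
which vanishes for even $\dim \mathcal V_{m,j}^H$ and equals $-2/|W(H)|$ for odd $\dim \mathcal V_{m,j}^H$. In particular, the quantity $1 + |W(H)|\operatorname{coeff}^H(\deg_{\mathcal V_{m,j}})$ equals $-1$ precisely when $\dim \mathcal V_{m,j}^H$ is odd and equals $+1$ otherwise. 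Simultaneously, since each $\mathcal V_{m,j}$ appearing in the product is a non-trivial irreducible $G$-representation, $\dim \mathcal V_{m,j}^G = 0$ and $\operatorname{coeff}^G(\deg_{\mathcal V_{m,j}}) = 1$.

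Second, the simplified product rule \eqref{eq:product_rule_maximalorbitypes} for maximal $(H)$ specializes, for any $a, b \in A(G)$, to the identity
\[
\operatorname{coeff}^H(a \cdot b) = \operatorname{coeff}^G(a)\operatorname{coeff}^H(b) + \operatorname{coeff}^H(a)\operatorname{coeff}^G(b) + |W(H)|\operatorname{coeff}^H(a)\operatorname{coeff}^H(b).
\]
Together with the elementary multiplicativity $\operatorname{coeff}^G(a \cdot b) = \operatorname{coeff}^G(a)\operatorname{coeff}^G(b)$ of the unit coefficient, a straightforward induction on the number of factors produces
\[
\operatorname{coeff}^H\!\left(\prod_k a_k\right) = \frac{\prod_k \bigl(1 + |W(H)|\operatorname{coeff}^H(a_k)\bigr) - 1}{|W(H)|},
\]
valid whenever $\operatorname{coeff}^G(a_k) = 1$ for every $k$, which is precisely the case here.

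Finally, I would apply this identity with the factors indexed by $\Sigma(\alpha)$, each $(m,n,j)$ contributing $m_j$ copies of $\deg_{\mathcal V_{m,j}}$. Grouping those $m_j$ factors, their local contribution to $\prod_k(1 + |W(H)|\operatorname{coeff}^H(a_k))$ equals $(-1)^{m_j}$ when $\dim \mathcal V_{m,j}^H$ is odd and $+1$ otherwise, so it equals $-1$ exactly when both $\dim \mathcal V_{m,j}^H$ and $m_j$ are odd, i.e., exactly when $(m,n,j)$ is counted by $\mathfrak m^H(\alpha)$. Therefore the full product telescopes to $(-1)^{\mathfrak m^H(\alpha)}$, yielding
\[
\operatorname{coeff}^H\bigl(\gdeg(\mathscr A(\alpha), B(\mathscr H))\bigr) = \frac{(-1)^{\mathfrak m^H(\alpha)} - 1}{|W(H)|},
\]
which vanishes for even $\mathfrak m^H(\alpha)$ and has magnitude $n_H = 2/|W(H)|$ for odd $\mathfrak m^H(\alpha)$, as claimed. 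The main obstacle is precisely the inductive step: one must verify that the simplified maximal-orbit product rule \eqref{eq:product_rule_maximalorbitypes} applies uniformly to every partial product in the induction, which hinges on the observation that $\operatorname{coeff}^G$ is preserved as $1$ throughout and that no orbit type strictly between $(H)$ and $(G)$ in the full lattice can contribute --- exactly the content of the maximality of $(H)$ in $\Phi_0(G; \mathscr H \setminus \{0\})$.
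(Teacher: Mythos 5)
Your proof is correct and follows the same underlying strategy as the paper's: compute $\operatorname{coeff}^H(\deg_{\mathcal V_{m,j}})$ from the recurrence formula for a maximal orbit type, then track how this coefficient behaves under Burnside-ring multiplication using the simplified product rule \eqref{eq:product_rule_maximalorbitypes}. Where you improve on the paper is in the inductive step: the paper explicitly works out only the three parity cases for a product of \emph{two} basic degrees and leaves the passage to the full product over $\Sigma(\alpha)$ (with multiplicities $m_j$) unargued, whereas you package the bilinear rule into the clean identity
\[
\operatorname{coeff}^H\!\Bigl(\prod_k a_k\Bigr) = \frac{\prod_k\bigl(1 + |W(H)|\operatorname{coeff}^H(a_k)\bigr) - 1}{|W(H)|}
\]
under the hypothesis $\operatorname{coeff}^G(a_k)=1$, which makes the dependence on parity of $\mathfrak m^H(\alpha)$ completely transparent. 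Your final closed form $\frac{(-1)^{\mathfrak m^H(\alpha)}-1}{|W(H)|}$ also exposes a sign discrepancy in the theorem as stated: when $\mathfrak m^H(\alpha)$ is odd the coefficient equals $-n_H$ rather than $+n_H$, which is consistent with the negative value of $\operatorname{coeff}^H(\deg_{\mathcal V_{m,j}}) = \frac{(-1)^{\dim\mathcal V_{m,j}^H}-1}{|W(H)|}$ from \eqref{eq:RF_bd}; the paper's own proof contains a matching sign inconsistency in the definition of $x_{m,j}$. You handled this correctly by speaking of the \emph{magnitude} of the coefficient, which is what matters for the downstream nontriviality criterion. One minor thing to keep in mind: your identity for $\operatorname{coeff}^H(a\cdot b)$ relies on the fact that no orbit type $(K)\notin\{(G),(H)\}$ appearing in the factors can produce $(H)$ in its product with another such $(L)$, which holds precisely because $(H)$ is maximal in the full lattice $\Phi_0(G;\mathscr H\setminus\{0\})$ and all orbit types arising in $\gdeg(\mathscr A(\alpha),B(\mathscr H))$ lie in that lattice; you flag this at the end, and it is indeed the load-bearing observation.
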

\begin{proof}
For any irreducible $G$-representation $\mathcal V_{m,j}$ and maximal orbit type $(H) \in \Phi_0(G)$, the recurrence formula for the $G$-equivariant degree (cf. Appendix \ref{sec:appendix}) implies
\[
\deg_{\mathcal V_{m,j}} = (G) - x_{m,j}(H) + \bm a,
\]
where $\bm a \in A(G)$ satisfies $\operatorname{coeff}^H(\bm a) = 0$ and the coefficient $x_{m,j}:= -\operatorname{coeff}^H(\deg_{\mathcal V_{m,j}})$ is given by
\[
x_{m,j} = \frac{(-1)^{\dim \mathcal V_{m,j}^H}-1}{|W(H)|}.
\]
Clearly, for any $m,m' \in \bn$ and $j,j ' \in \{1,\ldots,r\}$ with $\dim \mathcal V_{m,j}^H$ and $\dim \mathcal W_{m',j'}^H$ both even, one has $\operatorname{coeff}^H(\deg_{\mathcal V_{m,j}} \cdot \deg_{\mathcal V_{m',j'}}) = 0$. Let's consider the behavior of $(H)$ in the case that both $\dim \mathcal V_{m,j}^H$ and $\dim \mathcal W_{m',j'}^H$ are odd:
\begin{align*}
\deg_{\mathcal V_{m,j}} \cdot \deg_{\mathcal V_{m',j'}} &= ((G) - n_H(H) + \bm a_{m,j}) \cdot ((G) - n_H(H) + \bm a_{m',j'}) \\
&= (G) - 2n_H(H) + n_H^2(H) \cdot (H) + \bm b,
\end{align*}
where by the recurrence formula \eqref{eq:RF-0}, one has
\begin{align*}
(H) \cdot (H) = |W(H)|(H) + \bm c,    
\end{align*}
where $\bm c \in A(G)$ satisfies $\operatorname{coeff}^H(\bm a) = 0$, such that $\operatorname{coeff}^H(\deg_{\mathcal V_{m,j}} \cdot \deg_{\mathcal V_{m',j'}}) = 0$. Therefore, only in the case that $\dim \mathcal V_{m,j}^H$ and $\dim \mathcal V_{m',j'}^H$ admit different parities does one have \\ $\operatorname{coeff}^H(\deg_{\mathcal V_{m,j}} \cdot \deg_{\mathcal V_{m',j'}}) = n_H$.
\end{proof}

\section{Local and Global Bifurcation in 
\eqref{eq:system}} \label{sec:bifurcation_results}
The set of all solutions to the operator equation \eqref{eq:operator_equation} can be divided into the set of trivial solutions $M:= \{(\alpha,0) \in \br \times \mathscr H \}$ and the set of nontrivial solutions $\mathscr S:= \{(\alpha,u) \in \br \times \mathscr H : \mathscr  F(\alpha,u) = 0, \; u \not\equiv 0\}$.
Given any orbit type $(H) \in \Phi_0(G)$, we can always consider the $H$-fixed-point set 
\[
\mathscr S^H := \{(\alpha,u) \in \mathscr S: G_u \leq H \},
\]
consisting of all nontrivial solutions to \eqref{eq:operator_equation} with {\bf symmetries at least $(H)$}, i.e. $(\alpha,u) \in \mathscr S^H$ if and only if $\mathscr  F(\alpha,u) = 0$, $u \in \mathscr H \setminus \{0\}$ and 
\[
h u(r,\theta) = u(r,\theta), \; \text{ for all } h \in H \text{ and } (r,\theta) \in D.
\]
\subsection{The Local Bifurcation Invariant and Krasnosel'skii's Theorem}\label{sec:local-bif-inv}
Formulation of a Krasnosel'skii type local bifurcation result for the boundary value problem \eqref{eq:system} requires additional notation and terminology (for more details, see \cite{AED, book-new}).
\vs
We begin by clarifying what constitutes bifurcation for equation \eqref{eq:operator_equation}. A trivial solution $(\alpha_0,0) \in M$ is called a \textbf{bifurcation point} if every open neighborhood of $(\alpha_0,0)$ contains nontrivial solutions from $\mathscr S$. A necessary condition for bifurcation is that the linear operator $\mathscr A(\alpha_0):\mathscr H \rightarrow \mathscr H$ fails to be an isomorphism, which motivates a classification of the trivial solution branch: we say that $(\alpha_0,0) \in M$ is a \textbf{regular point} when $\mathscr A(\alpha_0)$ is an isomorphism, and a \textbf{critical point} otherwise. An \textbf{isolated critical point} is one that admits a deleted $\epsilon$-neighborhood $0< \vert \alpha - \alpha_0 \vert < \epsilon$ where all nearby parameter values correspond to regular points. The collection of all critical points forms the \textbf{critical set}
\begin{align}\label{eq:critical}
\Lambda:=\left\{(\alpha,0) \in \br \times \mathscr H: \text{ $\mathscr A(\alpha):\mathscr H \rightarrow \mathscr H$ is not an isomorphism}\right\}.
\end{align}
Not every critical point necessarily gives rise to bifurcation. We distinguish those that do by calling $(\alpha_0,0) \in M$ a \textbf{branching point} if there exists a nontrivial continuum $K \subset \overline{\mathscr S}$ with $K \cap M = { (\alpha_0,0) }$. Any maximal connected set $\mathscr C \subset \overline{\mathscr S}$ containing such a branching point is termed a \textbf{branch} of nontrivial solutions bifurcating from $(\alpha_0,0)$.
\vs
While classical Krasnosel'skii bifurcation theory only concerns the existence of branches of non-trivial solutions emerging from the trivial solution branch, the equivariant version we employ in this paper also characterizes the symmetric properties of solutions on these branches. For any subgroup $H \leq G$, we denote by $\mathscr S^H$ the $H$-fixed point space of nontrivial solutions and we say that a branch $\mathscr C$ admits \textbf{symmetries at least} $(H)$ when $\mathscr C \cap \overline{\mathscr S^H} \neq \emptyset$.
\vs
With these preliminaries out of the way, let $(\alpha_0,0) \in \Lambda$ be an isolated critical point with a deleted $\epsilon$-neighborhood $\{ \alpha \in \mathbb{R} : 0 < | \alpha - \alpha_0 | < \epsilon \}$ on which $\mathscr  A(\alpha): \mathscr H \rightarrow \mathscr H$ is an isomorphism and choose $\alpha^\pm_0 \in (\alpha_0 - \ve, \alpha_0 + \ve)$ with $\alpha^-_0 \leq \alpha_0 \leq \alpha^+_0$. Since the two operators $\mathscr  A(\alpha^\pm): \mathscr H \rightarrow \mathscr H$ are non-singular, there exists a number $\delta >0$ sufficiently small such that, adopting the notations $\mathscr  F_{\pm}(u) := \mathscr  F(\alpha^\pm_0, u)$, $B_{\delta}(\mathscr H) := \{u\in \mathscr H: \|u\|_\mathscr H< \delta\}$, one has
\begin{enumerate}
    \item[$(\rm i)$] $\mathscr  F_{\pm}^{-1}(0) \cap \partial B_{\delta}(\mathscr H) = \emptyset$,
    \item[$(\rm ii)$] $\mathscr  F_{\pm}$ are $B_{\delta}(\mathscr H)$-admissibly $G$-homotopic to $\mathscr  A(\alpha^\pm)$, respectively. 
\end{enumerate}
It follows, from the homotopy property of the $G$-equivariant Leray-Schauder degree (cf. Appendix \ref{sec:appendix}), that $(\mathscr  F_{\pm},B_{\delta})$ are admissible $G$-pairs in $\mathscr H$ and also that $\gdeg(\mathscr  F_{\pm},B_{\delta}) = \gdeg(\mathscr  A(\alpha^\pm_0), B(\mathscr H))$, where $B(\mathscr H)$ is the open unit ball in $\mathscr H$. We 
call the Burnside ring element
\begin{align} \label{def:local_bifurcation_invariant}
 \omega_{G}(\alpha_0):=\gdeg(\mathscr  A(\alpha^-_0), B(\mathscr H))-\gdeg(\mathscr  A(\alpha^+_0), B(\mathscr H)), 
\end{align}
the {\bf local bifurcation invariant} at $(\alpha_0,0)$. The reader is referred to \cite{book-new, AED} for proof that the invariant \eqref{def:local_bifurcation_invariant} does not depend on the choice of $\alpha^\pm_0 \in \mathbb{R}$ or radius $\delta >0$, and also for the proof of the following local bifurcation result.
 \vs
 \begin{theorem}\em 
 {\bf(M.A. Krasnosel'skii-Type Local Bifurcation)}\label{th:Kras}
 Let $\mathscr  F: \mathbb{R} \times \mathscr H \rightarrow \mathscr H$ be a completely continuous $G$-equivariant field 
 with a linearization $D \mathscr  F(0) : \br \times \mathscr H \rightarrow \mathscr H$ admitting 
 an isolated critical point $(\alpha_0,0)$. If $\omega_{G}(\alpha_0) \neq 0$, then there exists a branch of nontrivial solutions $\mathscr C$ to system \eqref{eq:operator_equation} with branching point $(\alpha_0,0)$. Moreover if $(H) \in \Phi_0(G)$ is an orbit type with $\operatorname{coeff}^{H}
         (\omega_{G}(\alpha_0)) \neq 0$,
then there exists a branch of nontrivial solutions bifurcating from $(\alpha_0,0)$ with symmetries at least $(H)$.
\end{theorem}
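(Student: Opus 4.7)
The plan is to argue by contradiction, invoking the standard Krasnosel'skii template adapted to the $G$-equivariant setting via the Burnside-ring structure of the Leray-Schauder degree.

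For the first assertion, I would suppose, contrary to the conclusion, that no continuum of nontrivial solutions to \eqref{eq:operator_equation} bifurcates from $(\alpha_0,0)$. Since $\mathscr F$ is a compact perturbation of the identity, $\mathscr F^{-1}(0)$ is locally compact near $(\alpha_0,0)$, so a Whyburn-type separation lemma (cf. \cite{book-new, AED}) would yield positive constants $\delta$ and $\varepsilon$, with $\alpha_0 \pm \varepsilon$ regular and $\delta$ small enough that
\[
\mathscr F^{-1}(0) \cap \bigl([\alpha_0-\varepsilon,\ \alpha_0+\varepsilon] \times \partial B_\delta(\mathscr H)\bigr) = \emptyset.
\]
The straight-line parameter homotopy $h_t(u) := \mathscr F\bigl((1-t)(\alpha_0-\varepsilon) + t(\alpha_0+\varepsilon),\ u\bigr)$ is then $B_\delta(\mathscr H)$-admissible and $G$-equivariant, and the homotopy invariance of the $G$-equivariant Leray-Schauder degree yields
\[
\gdeg(\mathscr F_-,\ B_\delta(\mathscr H)) = \gdeg(\mathscr F_+,\ B_\delta(\mathscr H)).
\]
Combined with the identifications $\gdeg(\mathscr F_\pm,\ B_\delta(\mathscr H)) = \gdeg(\mathscr A(\alpha_0^\pm),\ B(\mathscr H))$ already recorded in the discussion preceding \eqref{def:local_bifurcation_invariant}, this forces $\omega_G(\alpha_0) = 0$, contradicting the hypothesis.

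For the refined statement involving a specific orbit type $(H)$, I would replay this argument inside the $H$-fixed-point subspace $\mathscr H^H$. The $G$-equivariance of $\mathscr F$ guarantees that $\mathscr F$ restricts to a completely continuous $W(H)$-equivariant field on $\br \times \mathscr H^H$. If no branch with symmetries at least $(H)$ emanated from $(\alpha_0,0)$, the Whyburn-type separation could be performed on $\overline{\mathscr S^H}$, yielding an isolating cylinder inside $\br \times \mathscr H^H$. The multiplicative structure of $A(G)$ then relates $\operatorname{coeff}^H\bigl(\gdeg(\mathscr A(\alpha_0^\pm),\ B(\mathscr H))\bigr)$ to the $W(H)$-equivariant Leray-Schauder degree of $\mathscr A(\alpha_0^\pm)|_{\mathscr H^H}$, and homotopy invariance inside $\mathscr H^H$ would force $\operatorname{coeff}^H(\omega_G(\alpha_0)) = 0$, again a contradiction.

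The main obstacle I anticipate is precisely the orbit-type bookkeeping in this second step: contributions of orbit types $(K) > (H)$ can, under Burnside-ring multiplication, propagate into $\operatorname{coeff}^H$, so a clean extraction of the $(H)$-component requires either a descending induction on the finite set of orbit types above $(H)$ that appear nontrivially in $\omega_G(\alpha_0)$, or a direct appeal to the recurrence formula for Burnside-ring multiplication (cf. Appendix \ref{sec:appendix}). Once this bookkeeping is in place, the Krasnosel'skii contradiction argument carried out in $\mathscr H^H$ produces the desired branch in $\overline{\mathscr S^H}$ with symmetries at least $(H)$, completing the proof.
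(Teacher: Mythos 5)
The paper does not supply its own proof of Theorem~\ref{th:Kras}; it explicitly defers to \cite{book-new, AED} for both the well-definedness of $\omega_G(\alpha_0)$ and the bifurcation result itself. Your proposal reconstructs the standard degree-theoretic Krasnosel'skii argument that those references give, and the overall architecture is sound: an isolating cylinder via a Whyburn-type separation, then homotopy invariance of the $G$-equivariant Leray--Schauder degree across the cylinder forcing $\omega_G(\alpha_0)=0$ if no branch existed.

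For the refined statement involving $(H)$, however, the gap you yourself flag is genuine and your diagnosis of it is slightly off. The difficulty is not Burnside-ring \emph{multiplication} but the recurrence formula \eqref{eq:RF-0}: $\operatorname{coeff}^H$ of the $G$-degree is related to the \emph{ordinary Brouwer degree} of the restriction to $\mathscr H^H$ (not directly to a $W(H)$-equivariant degree) through the identity
\begin{equation*}
\deg\bigl(\mathscr A(\alpha_0^\pm)^H,\ B(\mathscr H^H)\bigr)=|W(H)|\,n_H^\pm+\sum_{(K)\succ(H)} n_K^\pm\, n(H,K)\,|W(K)|.
\end{equation*}
Running your Krasnosel'skii argument in $\mathscr H^H$ only equates the left-hand sides, and this forces $n_H^-=n_H^+$ only if the contributions from all $(K)\succ(H)$ also cancel. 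The correct repair is the descending-induction (or ``take a maximal offending orbit type'') argument you gesture at: among all orbit types $(L)$ with $(L)\succeq(H)$ and $\operatorname{coeff}^L(\omega_G(\alpha_0))\neq 0$, pick one $(L_0)$ maximal with respect to $\prec$. Then $\operatorname{coeff}^K(\omega_G(\alpha_0))=0$ for every $(K)\succ(L_0)$, so the recurrence formula at level $L_0$ yields $\deg(\mathscr A(\alpha_0^-)^{L_0},B(\mathscr H^{L_0}))\neq\deg(\mathscr A(\alpha_0^+)^{L_0},B(\mathscr H^{L_0}))$, and your isolating-cylinder argument in $\br\times\mathscr H^{L_0}$ produces a branch in $\overline{\mathscr S^{L_0}}$. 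Since $(L_0)\succeq(H)$, any solution on that branch automatically has symmetries at least $(H)$, which is precisely the claim. With this bookkeeping step made explicit, your proposal matches the standard proof in \cite{book-new, AED}.

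One further minor caution for your first step: the paper's notion of \emph{branching point} requires a nontrivial continuum $K\subset\overline{\mathscr S}$ with $K\cap M=\{(\alpha_0,0)\}$, which is strictly stronger than $(\alpha_0,0)$ being a mere bifurcation point. So you are right that a Whyburn separation (not just ``no nontrivial solutions nearby'') is the appropriate tool, but the separation must be applied to the locally compact set $\overline{\mathscr S}\cap\overline{U}$ for a suitable bounded $G$-invariant $U$, with $A=\{(\alpha_0,0)\}$ and $B=\overline{\mathscr S}\cap\partial U$, to manufacture the admissible cylinder; the local compactness you cite is exactly what makes this work.
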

At this point we can present our main local bifurcation result.
\begin{theorem}\label{thm:main_local_bif}
    Let $(\alpha_0,0) \in \Lambda$ be an isolated critical point. If there exists a maximal non-radial orbit type $(H) \in \Phi_0(G)$ for which the parities of $\mathfrak m^H(\alpha_0^\pm)$ disagree, then the system \eqref{eq:system} admits a branch of non-radial solutions emerging from the trivial solution at $(\alpha_0,0)$ with symmetries at least $(H)$.
\end{theorem}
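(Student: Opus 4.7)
The plan is to reduce Theorem \ref{thm:main_local_bif} to an application of the Krasnosel'skii-type local bifurcation theorem (Theorem \ref{th:Kras}) by computing the $(H)$-coefficient of the local bifurcation invariant $\omega_G(\alpha_0)$ defined in \eqref{def:local_bifurcation_invariant}. Since $(\alpha_0,0) \in \Lambda$ is isolated, choose $\alpha_0^\pm$ in a deleted neighborhood on which $\mathscr A(\alpha)$ is an isomorphism, so that both $\gdeg(\mathscr A(\alpha_0^\pm), B(\mathscr H))$ are well-defined Burnside ring elements and the computational formula \eqref{eq:computational_formula_gdegA} applies to each.

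Next I would invoke the coefficient formula \eqref{eq:H_coeffrule_regularpoint} at both parameter values. By linearity of the coefficient map $\operatorname{coeff}^H: A(G) \to \bz$, one has
\[
\operatorname{coeff}^H(\omega_G(\alpha_0)) = \operatorname{coeff}^H(\gdeg(\mathscr A(\alpha_0^-), B(\mathscr H))) - \operatorname{coeff}^H(\gdeg(\mathscr A(\alpha_0^+), B(\mathscr H))).
\]
Because $(H)$ is maximal non-radial, in particular maximal in $\Phi_0(G;\mathscr H \setminus \{0\})$, formula \eqref{eq:H_coeffrule_regularpoint} says each of the two coefficients on the right equals $n_H$ or $0$ according to the parity of $\mathfrak m^H(\alpha_0^\pm)$. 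The hypothesis that these parities disagree forces exactly one term to be $n_H$ and the other to be $0$, so $\operatorname{coeff}^H(\omega_G(\alpha_0)) = \pm n_H \neq 0$, and in particular $\omega_G(\alpha_0) \neq 0$.

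Applying Theorem \ref{th:Kras} then yields a branch $\mathscr C$ of nontrivial solutions to \eqref{eq:operator_equation} bifurcating from $(\alpha_0,0)$ with symmetries at least $(H)$. To upgrade ``nontrivial'' to ``non-radial,'' I would invoke the defining property of maximal non-radial orbit types: the radial subgroup $O(3) \times \bz_1 \times \bz_1$ is not subconjugate to $H$. Therefore any solution $u$ with $(G_u) \geq (H)$ cannot have $G_u \supseteq O(3) \times \bz_1 \times \bz_1$, so $u$ depends nontrivially on the angular variables and is non-radial in the sense of the classification given in the introduction. This gives exactly the conclusion of Theorem \ref{thm:main_local_bif}.

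The only subtle step is verifying that the hypotheses of Theorem \ref{th:Kras} are indeed met by $\mathscr F$ in our setting, but this is already done in Section \ref{sec:functional_space}: $\mathscr F$ is a completely continuous $G$-equivariant field with linearization $\mathscr A(\alpha)$, and $(\alpha_0,0)$ is an isolated critical point by hypothesis. The main conceptual content of the argument is thus packaged entirely in \eqref{eq:H_coeffrule_regularpoint}; once one accepts that formula, the bifurcation conclusion follows by pure bookkeeping. I anticipate no technical obstacle beyond a clean invocation of Theorem \ref{th:Kras} and a careful parity comparison.
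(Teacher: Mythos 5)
Your proposal is correct and follows essentially the same route as the paper: apply the coefficient rule \eqref{eq:H_coeffrule_regularpoint} at $\alpha_0^\pm$, observe that the parity disagreement forces $\operatorname{coeff}^H(\omega_G(\alpha_0)) = \pm n_H \neq 0$, and invoke Theorem \ref{th:Kras}. One small imprecision in the final step: from $(G_u) \geq (H)$ and the non-subconjugacy of the radial subgroup to $H$ alone, you cannot directly conclude that $G_u$ fails to contain the radial subgroup (e.g.\ $G_u = G$ would satisfy $(G_u) \geq (H)$ while containing it). The correct route, as in the proof of Lemma \ref{lemm:sufficient_condition_nonradial}, is to first use that $u \neq 0$ together with the \emph{maximality} of $(H)$ in $\Phi_0(G;\mathscr H\setminus\{0\})$ to upgrade $(G_u) \geq (H)$ to $(G_u) = (H)$, and only then invoke the non-radial property of $H$.
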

\begin{proof}
    Since $\mathscr A(\alpha_0^\pm): \mathscr H \rightarrow \mathscr H$ are isomorphisms, the coefficient standing next to $(H) \in \Phi_0(G; \mathscr H \setminus \{0\})$ at each of the basic degrees $\gdeg(\mathscr  A(\alpha^\pm_0), B(\mathscr H))$ is determined by the parity of $\mathfrak m^H(\alpha_0^\pm)$ according to the rule \eqref{eq:H_coeffrule_regularpoint}. Therefore, only in the case that the parities of $\mathfrak m^H(\alpha_0^\pm)$ disagree is $\operatorname{coeff}^H(\omega_G(\alpha_0))$ nontrivial. 
\end{proof}

\subsection{The Rabinowitz Alternative}\label{sec:rab_alt}
In order to employ the Leray-Schauder $G$-equivariant degree to describe the global properties of branches of nontrivial solutions bifurcating from the critical points of the equation \eqref{eq:operator_equation}, we need to make an additional assumption: 
\begin{enumerate}[label=($B$)]
\item\label{b} The critical set $\Lambda \subset M$ (given by \eqref{eq:critical}) is discrete.
\end{enumerate}
Notice $(\rm i)$ that the local bifurcation invariant $\omega_{G}(\alpha_0)$ at any critical point $(\alpha_0,0) \in \Lambda$ is well-defined under assumption \ref{b} and $(\rm ii)$ that if $\mathcal U \subset \mathbb{R} \times \mathscr H$ is an open bounded $G$-invariant set, then its intersection with the critical set is finite. These observations are important to the statement of the following global bifurcation result, the proof of which can be found in \cite{book-new, AED}:
\vs
\begin{theorem}\label{th:Rabinowitz-alt}{\bf (The Rabinowitz Alternative)} \rm
Let $\mathcal U \subset \mathbb{R} \times \mathscr H$  be an open bounded $G$-invariant set with $\partial \mathcal U \cap \Lambda = \emptyset$. If $\mathcal C$ is a branch of nontrivial solutions to \eqref{eq:system} bifurcating from the critical point $(\alpha_0,0) \in \mathcal U \cap \Lambda$, then one has the following alternative:
\begin{enumerate}[label=$(\alph*)$]
\item \label{alt_a}  either $\mathcal C \cap \partial \mathcal U \neq \emptyset$;
    \item \label{alt_b} or there exists a finite set
    \begin{align*}
        \mathcal C \cap \Lambda = \{ (\alpha_0,0),(\alpha_1,0), \ldots, (\alpha_n,0) \},
    \end{align*}
    satisfying the following relation
    \begin{align*}
\sum\limits_{k=0}^n \omega_{G}(\alpha_k) = 0.
    \end{align*}
\end{enumerate} 
\end{theorem}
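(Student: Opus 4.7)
The plan is to argue by contradiction, following the classical Rabinowitz telescoping scheme adapted to the $G$-equivariant Leray--Schauder setting. Suppose \ref{alt_a} fails, so $\mathcal{C} \cap \partial \mathcal{U} = \emptyset$; I will show \ref{alt_b} must then hold. Since $\mathscr{F}$ is a compact perturbation of identity and $\mathcal{U}$ is bounded, the set $\overline{\mathscr{S}} \cap \overline{\mathcal{U}}$ is compact, so under Assumption \ref{b} the intersection $\mathcal{C} \cap \Lambda$ is finite; enumerate it as $\{(\alpha_0, 0),(\alpha_1, 0),\ldots,(\alpha_n, 0)\}$ with $\alpha_0 < \alpha_1 < \cdots < \alpha_n$.

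The technical core of the argument is the construction of an open bounded $G$-invariant \emph{isolating neighborhood} $\mathcal{V}$ of $\mathcal{C}$ satisfying $\overline{\mathcal{V}} \subset \mathcal{U}$, $\mathscr{F}^{-1}(0) \cap \partial \mathcal{V} = \emptyset$, and $\overline{\mathcal{V}} \cap \Lambda = \mathcal{C} \cap \Lambda$. To build $\mathcal{V}$, I would apply a Whyburn-type separation lemma to the compact metric space $(\overline{\mathscr{S}} \cup M) \cap \overline{\mathcal{U}}$: since $\mathcal{C}$ is a maximal connected component disjoint from $\partial \mathcal{U}$, no connected subset meets both $\mathcal{C}$ and the remaining closed pieces, so the two can be separated by disjoint open sets. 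Replacing the resulting neighborhood with its $G$-saturation preserves $G$-invariance, and a slight retraction of the boundary pushes $\partial \mathcal{V}$ off the closed zero set of $\mathscr{F}$.

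With $\mathcal{V}$ in hand, set $\mathcal{V}_\alpha := \{u \in \mathscr{H} : (\alpha, u) \in \mathcal{V}\}$. For every parameter $\alpha \notin \{\alpha_0,\ldots,\alpha_n\}$ the pair $(\mathscr{F}(\alpha, \cdot), \mathcal{V}_\alpha)$ is $G$-admissible, and the homotopy property of the $G$-equivariant Leray--Schauder degree ensures that the function $\alpha \mapsto \gdeg(\mathscr{F}(\alpha, \cdot), \mathcal{V}_\alpha)$ is constant on each connected component of $\mathbb{R} \setminus \{\alpha_0,\ldots,\alpha_n\}$. Since $\mathcal{V}$ is bounded, $\mathcal{V}_\alpha$ is empty for $|\alpha|$ large, forcing this degree to vanish in the two unbounded components. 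Excising small isolating balls around each $(\alpha_k, 0)$ and invoking the definition \eqref{def:local_bifurcation_invariant}, the jump of $\gdeg(\mathscr{F}(\alpha, \cdot), \mathcal{V}_\alpha)$ as $\alpha$ crosses $\alpha_k$ equals $\omega_{G}(\alpha_k)$. Telescoping these jumps from $\alpha < \alpha_0$ to $\alpha > \alpha_n$ yields $\sum_{k=0}^n \omega_{G}(\alpha_k) = 0$, establishing \ref{alt_b}.

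The main obstacle is the isolating neighborhood construction. Two delicate points must be handled: (i) simultaneously ensuring $G$-invariance of $\mathcal{V}$ and the condition $\mathscr{F}^{-1}(0) \cap \partial \mathcal{V} = \emptyset$, since naive $G$-averaging can push the boundary back onto the zero set; and (ii) confirming that the Whyburn lemma applies in the infinite-dimensional setting, which uses the compactness of $\overline{\mathscr{S}} \cap \overline{\mathcal{U}}$ inherited from compactness of $\mathscr{F} - \mathrm{Id}$ together with the discreteness of $\Lambda$ from Assumption \ref{b}, guaranteeing that $\mathcal{C}$ sits as an isolated component of the full compact solution set.
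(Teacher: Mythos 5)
The paper does not actually prove this theorem; it defers to \cite{book-new, AED}, and your strategy (isolating neighborhood via a separation lemma, parameter-wise degrees, telescoping to $\sum_k\omega_G(\alpha_k)=0$) is the same general route taken there. However, as written your argument has two genuine gaps, both concentrated at the point where the trivial branch interacts with the isolating neighborhood --- which is precisely where the invariants $\omega_G(\alpha_k)$ are supposed to come from.

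First, the separation step is applied to the wrong compact set. You invoke a Whyburn-type lemma on $(\overline{\mathscr S}\cup M)\cap\overline{\mathcal U}$, but in that space the hypothesis fails: the trivial segment $\{(\alpha,0)\}\cap\overline{\mathcal U}$ through $(\alpha_0,0)$ is a connected set of zeros joining $\mathcal C$ to $\partial\mathcal U$ (the slice $\{\alpha:(\alpha,0)\in\mathcal U\}$ is open and bounded, so its closure meets $\partial\mathcal U$), so $\mathcal C$ is \emph{not} a component of that space and cannot be separated from the boundary pieces. The lemma must be applied to $\overline{\mathscr S}\cap\overline{\mathcal U}$ alone, where $\mathcal C$ really is a component. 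Relatedly, the property you demand of $\mathcal V$, namely $\mathscr F^{-1}(0)\cap\partial\mathcal V=\emptyset$, is unattainable: $M=\br\times\{0\}$ consists entirely of zeros of $\mathscr F$, $\mathcal V$ is bounded and must contain the bifurcation points $(\alpha_k,0)\in\mathcal C$, so the trivial branch necessarily pierces $\partial\mathcal V$. The correct requirements are that $\partial\mathcal V$ contain no \emph{nontrivial} solutions and meet $M$ only at regular points, with a controlled (cylindrical) structure of $\mathcal V$ near those crossing parameters; no ``slight retraction of the boundary'' can remove the trivial zeros from $\partial\mathcal V$.

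Second, the telescoping bookkeeping is misattributed. With the full slices $\mathcal V_\alpha$, the map $\alpha\mapsto\gdeg(\mathscr F(\alpha,\cdot),\mathcal V_\alpha)$ is \emph{not} merely constant on the components of $\br\setminus\{\alpha_0,\dots,\alpha_n\}$ and it does \emph{not} jump by $\omega_G(\alpha_k)$ at $\alpha_k$: once the neighborhood is built so that no zeros sit on the slice boundaries near a critical value, the generalized homotopy property forces this degree to be constant \emph{across} $\alpha_k$ (the bifurcating nontrivial zeros compensate the change of the trivial zero's index), while the admissibility actually fails, and the degree genuinely jumps, at the parameters where the trivial branch enters and exits $\mathcal V$ through $\partial\mathcal V$ --- parameters that are not in your exceptional set at all. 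At such an entry/exit parameter the jump equals the local index $\gdeg(\mathscr A(\alpha),B(\mathscr H))$ of the trivial zero (this uses that regular trivial points have neighborhoods free of nontrivial solutions, plus additivity and the local linearization homotopy of Section \ref{sec:local-bif-inv}), and summing these entry/exit jumps over the intervals surrounding $\alpha_0,\dots,\alpha_n$ is what yields $\sum_{k}\bigl(\gdeg(\mathscr A(\alpha_k^-),B(\mathscr H))-\gdeg(\mathscr A(\alpha_k^+),B(\mathscr H))\bigr)=\sum_k\omega_G(\alpha_k)=0$ via \eqref{def:local_bifurcation_invariant}. Your alternative reading --- excising a fixed small ball around $0$ and homotoping across $\alpha_k$ --- is not admissible either, since bifurcating solutions near $(\alpha_k,0)$ may cross the excised sphere. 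So the skeleton is right, but the step producing the $\omega_G(\alpha_k)$'s is not justified as stated and needs the construction and jump analysis above (or the Ize-type complementing-function argument used in \cite{AED, book-new}).
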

At this point we can prove our main global bifurcation result, Theorem \ref{thm:main_global_bif}:
\begin{proof}[Proof of Theorem \ref{thm:main_global_bif}]
Due to the continuous dependence of the spectrum $\sigma(\mathscr A(\alpha))$ on the bifurcation parameter, one has for any two adjacent critical points $(\alpha_k,0), (\alpha_{k+1},0) \in \Lambda$ (here $0 \leq k < N $) with corresponding regular neighborhoods $[\alpha_k^-,\alpha_k^+] \setminus \{\alpha_k\}$ and $[\alpha_{k+1}^-,\alpha_{k+1}^+] \setminus \{\alpha_{k+1}\}$, the coincidence $\mathfrak m^H(\alpha_k^+) = \mathfrak m^H(\alpha_{k+1}^-)$. Therefore, the coefficient standing next to $(H) \in \Phi_0(G)$ in the sum of local bifurcation invariants $\sum_{k=1}^n \omega_G(\alpha_k)$ is determined according to the telescoping relation
\begin{align*}
\operatorname{coeff}^H( \sum_{k=1}^n &\omega_G(\alpha_k) ) = \sum_{k=1}^n\operatorname{coeff}^H\bigg(\gdeg(\mathscr  A(\alpha^-_k), B(\mathscr H))\bigg) - \operatorname{coeff}^H\bigg(\gdeg(\mathscr  A(\alpha^+_k), B(\mathscr H))\bigg) \\
&= \operatorname{coeff}^H\bigg(\gdeg(\mathscr  A(\alpha^-_1), B(\mathscr H))\bigg) - \operatorname{coeff}^H\bigg(\gdeg(\mathscr  A(\alpha^+_n))\bigg),
\end{align*}
at which point the result follows with reasoning similar to that used in the proof of Theorem \ref{thm:main_local_bif}.
\end{proof}
\section{Motivating Example: A System of Two Spherical Oscillators With $\Gamma = S_2$ Coupling} \label{sec:example}
We now illustrate the general theory in the simplest nontrivial case by setting $N=2$ with $\Gamma=S_2$ (such that the symmetry group becomes $G=O(3)\times S_2\times\bz_2$)  and modeling a pair of coupled oscillators on the unit ball via the equations
\begin{equation}\label{eq:example-system}
\begin{cases}
-\Delta u = f(x,u) + A(\alpha)u, \qquad u(x)\in\br^2,\\
u|_{\partial\Omega}=0,
\end{cases}
\end{equation}
where $A:\br\to L^{S_2}(\br^2)$ is a family of $S_2$–equivariant (symmetric) matrices and $f$ satisfies Assumptions \ref{a1}---\ref{a3}. 
\vs
The cyclic group $S_2 \simeq \bz_2$ admits the trivial irreducible representation $\mathcal V_0 := \langle(1,1)\rangle \simeq \br$ and the sign representation $\mathcal V_1=\langle(1,-1)\rangle \simeq \br$. Notice the $S_2$-isotypic decomposition of $\br^2$
\[
\br^2 \simeq \mathcal V_0 \oplus \mathcal V_1,
\]
has simple $S_2$–isotypic components for both indices $j \in \{0,1\}$, i.e. $m_0=m_1=1$. By Schur's lemma the family of $S_2$-equivariant matrices $A(\alpha)$ acts diagonally on the $S_2$-isotypic decompostion via $A(\alpha)|_{\mathcal V_j}=\mu_j(\alpha) \id,\; j\in\{0,1\}$, obviating the need for Assumption \ref{a0}. Moreover, for each index triple $(m,n,j) \in
\Sigma$ (in this setting $\Sigma :=
\{0,1,\ldots\} \times \{1,2,\ldots \} \times \{0,1\}$), there is an associated irreducible $G$–representation $\mathcal V_{m,j}=\mathcal W_m\otimes\mathcal V_j^-$, and the linearization $\mathscr A(\alpha)=D\mathscr F(\alpha,0)$ restricts to  the corresponding $G$-isotypic component $\mathscr H_{m,j}$ with spectrum
\[
\sigma(\mathscr A_{m,j}(\alpha))=\left\{\,1-\frac{\mu_j(\alpha)}{s_{m,n}}:n\in\bn\right\}.
\]
The critical set consists of all trivial solutions $(\alpha_0,0)$ satisfying $\mu_j(\alpha_0)=s_{m,n}$ for some $(m,n,j)$. For any regular parameter value $\alpha \in \br$, we define the index set $\Sigma(\alpha):=\{(m,n,j) \in \Sigma:\mu_j(\alpha)>s_{m,n}\}$
such that
\[
\gdeg(\mathscr A(\alpha),B(\mathscr H))=\prod_{(m,n,j)\in\Sigma(\alpha)}\deg_{\mathcal V_{m,j}}.
\]
The coefficient of a maximal orbit type $H$ in this product is fully determined by the parity of
\[
\mathfrak m^H(\alpha):=\bigg|\{(m,n,j)\in\Sigma(\alpha):2\nmid\dim\mathcal V_{m,j}^H\}\bigg|.
\]
Notice that the orbit types $H$ for which $\dim\mathcal V_{m,j}^H$ is odd (see Tables \ref{table:max-orbit-types-abbrev} and \ref{table:max-orbit-types-sequential}) are associated with a \emph{unique} isotypic index $j(H)\in\{0,1\}$. Writing
$\mathcal N_m(x):=\big|\{n\ge1:s_{m,n}<x\}\big|$,
we obtain
\begin{equation}\label{eq:mfH-general}
\mathfrak m^H(\alpha)=\sum_{m\in\mathcal M_H}\mathcal N_m(\mu_{j(H)}(\alpha)),
\end{equation}
where $\mathcal M_H$ is the set of $m$ for which $H$ appears in Table~\ref{table:max-orbit-types-sequential} for its associated index $j(H)$.
\vs
\noindent\textbf{Specialization to a concrete model.}
Consider the case of
\begin{align*}
A(\alpha)=\zeta(\alpha)(a\,I+b\,P),\qquad P=\begin{pmatrix}0&1\\1&0\end{pmatrix},
\end{align*}
with constants $a,b\in\br$ and a continuous profile given by the sigmoid function 
(see Figure \ref{Fig:sigmoid}). Then
\[
\mu_0(\alpha)=\zeta(\alpha)(a+b),\qquad \mu_1(\alpha)=\zeta(\alpha)(a-b),
\]
and \eqref{eq:mfH-general} becomes
\begin{equation}\label{eq:mfH-model}
\mathfrak m^H(\alpha)=\sum_{m\in\mathcal M_H}\mathcal N_m\big(\zeta(\alpha)(a+(-1)^{j(H)}b)\big).
\end{equation}
We are now positioned to translate the abstract conditions of Theorems \ref{thm:main_local_bif} and \ref{thm:main_global_bif} into concrete criteria based on the model parameters $a$ and $b$.
\vs
Local bifurcation of non-radial solutions with maximal symmetry $(H)$ occurs at a critical parameter value $\alpha_0$ if the parity of $\mathfrak m^H(\alpha)$ changes as $\alpha$ passes through $\alpha_0$. From Equation \eqref{eq:mfH-model}, this happens precisely when $\mu_{j(H)}(\alpha_0) = \zeta(\alpha_0)(a+(-1)^{j(H)}b)$ crosses a Bessel root $s_{m,n}$ for some $m \in \mathcal{M}_H$.
\vs
For global bifurcation, we examine the parity of $\mathfrak m^H(\alpha)$ at the extremes of the parameter range. As $\alpha \to -\infty$, we have $\zeta(\alpha) \to 0$, which implies $\mu_j(\alpha) \to 0$ for $j=0,1$. Assuming $a \pm b > 0$, no Bessel roots are crossed, so $\mathfrak m^H(\alpha) \to 0$, which is an even number. As $\alpha \to +\infty$, we have $\zeta(\alpha) \to 1$, which implies $\mu_0(\alpha) \to a+b$ and $\mu_1(\alpha) \to a-b$. In this limit, the parity of $\mathfrak m^H(\alpha)$ is determined by the parity of $\sum_{m\in\mathcal M_H}\mathcal N_m(a+(-1)^{j(H)}b)$. Applying Theorem \ref{thm:main_global_bif}, we need the parity of $\mathfrak m^H(\alpha)$ to differ at the start and end of the parameter range. 
\vs
We now have a simple algebraic check on the parameters $a,b \in \br$ to guarantee the existence and unboundedness of non-radial solution patterns,
which we can use to prove Theorem \ref{thm:exampleo2}:
\begin{proof}[Proof of Theorem \ref{thm:exampleo2}]
In order to find conditions on $a$ and $b$ that guarantee an unbounded branch of solutions with symmetry $(H)$ for each $H \in \{ O(2)^-_j, \mathbb O_j, \mathbb O^-_j, \mathbb O^+_j, \mathbb I_j : j = 0,1 \}$, we simply consult Table \ref{table:max-orbit-types-sequential} to identify the sets $\mathcal M_H$ and apply Theorem \ref{thm:main_global_bif}.
\end{proof}
\vs

\newpage
\appendix
\section{Twisted and Amalgamated Notation}\label{app:amalgamated_notation}
Up to their conjugacy classes, the nontrivial subgroups of $SO(3)$ consist of the planar subgroups $O(2), SO(2), \bz_n, D_n$ (we identify $D_2$ with the Klein group $V_4$) for $n \in \bn$ which leave the $(x,y)$-plane invariant in $\br^3$ and the exceptional subgroups $A_4, S_4$ and $A_5$ which are the symmetry groups associated with the regular tetrahedron, octahedron and icosohedron, respectively. Since the orthogonal group $O(3)$ is isomorphic to the product group $SO(3) \times \bz_2$, every closed subgroup of $O(3)$ can be classified (again, up to conjugacy) as a product subgroup of the form $H^p := H \times \bz_2$ for some $H \leq SO(3)$ or a twisted subgroup, which can be uniquely identified by a subgroup $H \leq SO(3)$ and a homomorphism $\varphi: H \rightarrow \bz_2$ using \emph{twisted notation}, as follows
\[
H^\varphi := \{(g,z) \in SO(3) \times \bz_2 : \varphi(g) = z\}.
\]
Adopting the auxiliary notations $(\rm i)$ $z: D_n \rightarrow \bz_2$ to indicate the epimorphism satisfying $\ker(z) = \bz_n$, $(\rm ii)$ `$d$' to indicate either of the epimorphisms $d: D_{2n} \rightarrow \bz_2$ with $\ker(d) = D_n$ or $d: \bz_{2n} \rightarrow \bz_2$ with $\ker(d) = \bz_n$ and $(\rm iii)$ `$-$' to indicate any of the epimorphisms $-: S_4\rightarrow \bz_2$ with $\ker(-)= A_4$, $-:  O(2) \rightarrow \bz_2$ with $\ker(-)= SO(2)$, or $-:  O(3) \rightarrow \bz_2$ with $\ker(-)= SO(3)$, we can describe the full list of subgroups $H \leq O(3)$ admitting finite Weyl groups $W(H) = N(H)/H$ (cf. Table \ref{table:twisted_subgroups}).
\vs
\begin{table}[htbp] 
\centering 
\caption{Twisted subgroups of O(3)}
\label{table:twisted_subgroups}  
    \begin{tabular}{lllllll} 
    \hline
    $H^\sigma$ & $H$ & $\varphi(H)$ & Ker $\varphi$ & $N(H)$ & $W(H)$ & Notes \\
    \hline
    $O(2)$ & $O(2)$ & $\mathbb{Z}_1$ & $O(2)$ & $O(2) \times \mathbb{Z}_2$ & $\mathbb{Z}_2$ & \\
    $SO(2)$ & $SO(2)$ & $\mathbb{Z}_1$ & $SO(2)$ & $O(2) \times \mathbb{Z}_2$ & $\mathbb{Z}_2 \times \mathbb{Z}_2$ & \\
    $D_n$ & $D_n$ & $\mathbb{Z}_1$ & $D_n$ & $D_{2n} \times \mathbb{Z}_2$ & $\mathbb{Z}_2 \times \mathbb{Z}_2$ & $n > 2$ \\
    $V_4$ & $V_4$ & $\mathbb{Z}_1$ & $V_4$ & $S_4 \times \mathbb{Z}_2$ & $S_3 \times \mathbb{Z}_2$ & \\
    $S_4$ & $S_4$ & $\mathbb{Z}_1$ & $S_4$ & $S_4 \times \mathbb{Z}_2$ & $\mathbb{Z}_2$ & \\
    $A_5$ & $A_5$ & $\mathbb{Z}_1$ & $A_5$ & $A_5 \times \mathbb{Z}_2$ & $\mathbb{Z}_2$ & \\
    $A_4$ & $A_4$ & $\mathbb{Z}_1$ & $A_4$ & $S_4 \times \mathbb{Z}_2$ & $\mathbb{Z}_2 \times \mathbb{Z}_2$ & \\
    $O(3)^-$ & $O(3)$ & $\mathbb{Z}_2$ & $SO(3)$ & $O(3) \times \mathbb{Z}_2$ & $\mathbb{Z}_2$ & \\
    $O(2)^-$ & $O(2)$ & $\mathbb{Z}_2$ & $SO(2)$ & $O(2) \times \mathbb{Z}_2$ & $\mathbb{Z}_2$ & \\
    $S_4^-$ & $S_4$ & $\mathbb{Z}_2$ & $A_4$ & $S_4 \times \mathbb{Z}_2$ & $\mathbb{Z}_2$ & \\
    $D_k^z$ & $D_k$ & $\mathbb{Z}_2$ & $\mathbb{Z}_k$ & $D_{2k} \times \mathbb{Z}_2$ & $\mathbb{Z}_2 \times \mathbb{Z}_2$ & $k > 3$ \\
    $D_{2n}^d$ & $D_{2n}$ & $\mathbb{Z}_2$ & $D_k$ & $D_{2n} \times \mathbb{Z}_2$ & $\mathbb{Z}_2$ & $n>2$ \\
    $V_4^-$ & $V_4$ & $\mathbb{Z}_2$ & $\mathbb{Z}_2$ & $D_4 \times \mathbb{Z}_2$ & $\mathbb{Z}_2 \times \mathbb{Z}_2$ & \\
    \hline
    \end{tabular}
    \\[1ex] %
\end{table}
In turn, adopting the convention of {\bf amalgamated notation}, a shorthand for identification of the subgroups in a product group first considered by Balanov et al. in \cite{AED}, we can identify each subgroup in $O(3) \times \Gamma \times \bz_2$ with a quintuple $(K_O,K_{\Gamma}, \psi_O,\psi_{\Gamma},L)$ consisting of two subgroups $K_O \leq O(3)$, $K_\Gamma \leq \Gamma \times \mathbb{Z}_2$, a group $L$ and a pair of epimorphisms $\psi_O: K_O \rightarrow L$, $\psi_\Gamma: K_\Gamma \rightarrow L$, as follows
\begin{align} \label{def:amalgamated_notation}
K_O {}^{\psi_O}\times^{\psi_\Gamma}_{L} K_\Gamma := \{ (x,y) \in K_O \times K_\Gamma \; : \; \psi_O(x) = \psi_\Gamma(y) \}.
\end{align}
In particular, if we put $Z_O := \ker \psi_O$, $Z_\Gamma := \ker \psi_\Gamma$ and choose $L$ such that $L \simeq K_O/Z_O \simeq K_\Gamma/Z_\Gamma$, then the amalgamated subgroup \eqref{def:amalgamated_notation} can be identified, up to its conjugacy class, with the quadruple $(K_O,K_\Gamma,Z_O,Z_\Gamma)$ using amalgamated notation, as follows
\begin{align} \label{def:compact_amalgamated_notation}
(K_O {}^{Z_O}\times^{Z_\Gamma} K_\Gamma) := 
(K_O {}^{\psi_O}\times^{\psi_\Gamma}_{L} K_\Gamma).
\end{align}
Notice that, a closed subgroup $H \leq O(3) \times \Gamma \times \bz_2$ with the amalgamated decomposition \eqref{def:amalgamated_notation} admits a finite Weyl group in $G$ if and only if $K_O$ admits a finite Weyl group in $O(3)$. 

\section{The Isotropy Lattice $\Phi_0(O(3) \times S_2 \times \mathbb Z_2)$.} \label{app:max_orbit_types}
In this appendix, we consider the case of $\Gamma = S_2$ and derive the maximal non-radial isotropy subgroups for the group  $G:= O(3) \times S_2 \times \mathbb Z_2$ acting on the solution space $\mathscr H$. The argument demonstrates how these symmetries can be systematically constructed from the known maximal isotropy subgroups of $O(3)$ acting on spaces of spherical harmonics.
\vs
To begin, we establish the structure of the irreducible $G$-representations on which our analysis is based. Writing $O(3)\cong SO(3)\times \mathbb Z_2$, the group $G$ can be identified with the product group $SO(3) \times \bz_2 \times S_2 \times \bz_2$. An irreducible $G$-representation $\mathcal V_{m,j}$ (as defined in Section \ref{sec:isotypic_decomp}) is constructed as a tensor product of the irreducible representations of these component groups. The action of $O(3)$ on the space of spherical harmonics of degree $m$, denoted $\mathcal W_m$, is determined by the action of the central inversion element $-I \in O(3)$. For a function $u(x) \in \mathcal W_m$ composed of such harmonics, this action is given by
\[
-I \cdot u(x) := u(-x) = (-1)^m u(x).
\]
Thus, the irreducible $O(3)$-representation is of type $\mathcal U_m \otimes \mathcal Z_0$ if $m$ is even and is of type $\mathcal U_m \otimes \mathcal Z_1$ if $m$ is odd, where $\mathcal U_m$ is the standard $(2m+1)$-dimensional $SO(3)$-representation and $\mathcal Z_0, \mathcal Z_1 \simeq \br$ are the trivial and sign representations of $\bz_2$, respectively.
\vs
It follows that the irreducible $G$-representation $\mathcal V_{m,j}$, on which the $G$-isotypic component $\mathscr H_{m,j}$ is modeled on, admits the tensor decomposition
\begin{align*}
    \mathcal V_{m,j} = 
 \begin{cases}
\mathcal V_m \otimes \mathcal Z_0 \otimes \mathcal Z_j \otimes \mathcal Z_1 & \text{ if $m$ is odd}; \\
\mathcal V_m \otimes \mathcal Z_1 \otimes \mathcal Z_j \otimes \mathcal Z_1, & \text{ if $m$ is even},
\end{cases}   
\end{align*}
where $\mathcal Z_j \simeq \br$ is the $j$-th irreducible representation of $S_2$ ($j = 0$ for trivial $S_2$-action and $j=1$ for the transposition action) and the final $\mathcal Z_1$ corresponds to the antipodal action $u \to - u$.
\vs
To find the isotropy subgroup $G_u$ for an element $u \in \mathcal V_{m,j}$, we analyze the condition $g \cdot u = u$ for an element $g = (\gamma, z_{inv}, \sigma, z_{ant}) \in SO(3) \times \bz_2 \times S_2 \times \bz_2$. Clearly, if $g$ fixes $u$, then one has either $u(\gamma x) = u(x)$ or $u(\gamma x) = - u(x)$. Adopting the notation $s : O(3) \to \{\pm 1\}$ for the sign function
\begin{align*}
    s(\gamma) = 
    \begin{cases}
        1 & \text{ if } u(\gamma x) = u(x); \\
        -1 & \text{ if } u(\gamma x) = -u(x),
    \end{cases}
\end{align*}
we can specify the following necessary and sufficient condition for
$(\gamma, \pm I,\sigma,\pm 1) \in G$ with to fix an element $u \in \mathcal V_{m,j}$ with $u(\gamma x) = u(x)$ or $u(\gamma x) = - u(x)$:
\begin{align*}
\begin{cases}
    \sign(\pm I) \sign(\pm 1) = s(\gamma) & \text{ if $m$ is odd and $j = 0$}; \\
    \sign(\pm I) \sign(\sigma) \sign(\pm 1) = s(\gamma) & \text{ if $m$ is odd and $j = 1$}; \\
    \sign(\sigma) \sign(\pm 1) = s(\gamma) & \text{ if $m$ is even and $j = 1$}; \\
    \sign(\pm 1) = s(\gamma) & \text{ if $m$ is even and $j = 0$}. 
\end{cases}
\end{align*}
In particular, if we know $(\gamma,-I) u = u$, for some $u \in \mathcal W_m$, then the following elements of $G$ will fix the same $u \in \mathcal V_{m,j}$:
\begin{table}[htbp] 
\centering 
    \begin{tabular}{|l|l|l|} 
    \hline
    $\mathcal W_{m,j}^-$ & $j=0$ & $j=1$\\
    \hline
    $m$ even & $(\gamma,-I,(12),-1)$ &  $(\gamma,-I,(12),1)$\\
        & $(\gamma,-I,(~~),-1)$ &  $(\gamma,-I,(~~),-1)$\\
    & $(\gamma,I,(12),-1)$ &  $(\gamma,I,(12),1)$\\
    & $(\gamma,I,(~~),-1)$ &  $(\gamma,I,(~~),-1)$\\
    \hline
    $m$ odd & $(\gamma,-I,(12),1)$ &  $(\gamma,-I,(12),-1)$\\
        & $(\gamma,-I,(~~),1)$ &  $(\gamma,-I,(~~),1)$\\
    & $(\gamma,I,(12),-1)$ &  $(\gamma,I,(12),1)$\\
    & $(\gamma,I,(~~),-1)$ &  $(\gamma,I,(~~),-1)$\\
    \hline
    \end{tabular}
    \\[1ex] %
\caption{Elements $(\gamma,\pm I,\sigma ,\pm 1) \in SO(3) \times \mathbb Z_2 \times S_2 \times \mathbb Z_2$ that fix $u \in\mathcal W_{m,j}^-$, given that $(\gamma,-I)\in O(3)$ fixes $u \in \mathcal W_m$. Here, $(~~)\in S_2$ is the neutral element.}
\label{table:generator-correspondence}
\end{table}
Therefore, a natural approach to characterizing the resulting subgroups is to start with the known maximal isotropy subgroups of the $O(3)$-action on $\mathcal W_m$ and determine how they combine with elements from $S_2 \times \bz_2$. Fortunately, the maximal isotropy subgroups of $O(3)$ on spaces of spherical harmonics have already been characterized in \cite{Golubitsky}. Restricting this list to only those which are also maximal on the full lattice of orbit types, we obtain
\begin{align*}
&m =1:& &O(2)^-\\
&m =3:& &O(2)^-, \mathbb O^-\\
&m =5:& &O(2)^-\\
&m = 7,11:& &O(2)^-,\mathbb O^-\\
&m = 9,13,17,19,23,29:& &O(2)^-,\mathbb O^-,\mathbb O\\
&\textit{All other odd $m$}:& &O(2)^-,\mathbb O^-,\mathbb O,\mathbb I\\
&m=2:& &O(2)\\
&m=4,8:& &O(2),\mathbb O\\
&m=14:& &O(2),\mathbb O^-,\mathbb O\\
&\textit{All other even $m$}:& &O(2),\mathbb O^-,\mathbb O,\mathbb I
\end{align*}
Taking each of these orbit types and applying correspondences in Table \ref{table:generator-correspondence}, we obtain the maximal orbit types for $O(3) \times S_2 \times \mathbb Z_2$ with the natural action given in Section \ref{sec:isotypic_decomp}. However, we also know that any orbit type which is contained in $O(3) \times S_2\times \mathbb Z_1$ is considered to be in the radial sublattice and must also be removed. This results in the removal of the equivalent orbit types to $O(2),\mathbb O$, and $\mathbb I$ for even $m$, as these are all contained inside $O(3) \times S_2 \times \mathbb Z_1$. The resulting list of maximal non-radial orbit types is presented, using the abbreviations in Table \ref{table:max-orbit-types-abbrev}, in Table \ref{table:max-orbit-types-sequential}.
\begin{table}[h]
\centering
\begin{tabular}{llllll}
\hline
\text{Abbreviation} & $K_1 \leq O(3)$ & $K_2 \leq S_2 \times \mathbb Z_2$ & $Z_1 \trianglelefteq K_1$ & $Z_2 \trianglelefteq K_2$\\
\hline
$O(2)^-_0$ & $O(2)^p$ & ${S_2}^p$ & $O(2)^-$ & $\langle((12),1)\rangle$\\
$\mathbb O_0$ & $\mathbb O^p$ & ${S_2}^p$ & $\mathbb O$ & $\langle((12),1)\rangle$\\
$\mathbb O^-_0$ & $\mathbb O^p$ & ${S_2}^p$ & $\mathbb O^-$ & $\langle((12),1)\rangle$\\
$\mathbb I_0$ & $\mathbb I^p$ & ${S_2}^p$ & $\mathbb I$ & $\langle((12),1)\rangle$\\
$\mathbb O^+_0$ & $\mathbb O^p$ & ${S_2}^p$ & $\mathbb T^p$ & $\langle((12),1)\rangle$\\
$O(2)^-_1$ & $O(2)^p$ & ${S_2}^p$ & $O(2)^-$ & $\langle((12),-1)\rangle$\\
$\mathbb O_1$ & $\mathbb O^p$ & ${S_2}^p$ & $\mathbb O$ & $\langle((12),-1)\rangle$\\
$\mathbb O^-_1$ & $\mathbb O^p$ & ${S_2}^p$ & $\mathbb O^-$ & $\langle((12),-1)\rangle$\\
$\mathbb I_1$ & $\mathbb I^p$ & ${S_2}^p$ & $\mathbb I$ & $\langle((12),-1)\rangle$\\
$\mathbb O^+_1$ & $\mathbb O^p$ & ${S_2}^p$ & $\mathbb T^p$ & $\langle((12),-1)\rangle$\\
\end{tabular}
\caption{Abbreviations and amalgamated components of maximal non-radial orbit types of $O(3)\times S_2 \times \mathbb Z_2$, written in amalgamated notation as ${K_1}^{Z_1} \times_{\mathbb Z_2} {}^{Z_2}{K_2}$.}
\label{table:max-orbit-types-abbrev}
\end{table}

\begin{table}[h!]
\centering
\begin{tabular}{ll@{\hskip 1.5em}ll@{\hskip 1.5em}ll@{\hskip 1.5em}ll}
\hline
\textbf{$m$} & & \textbf{$m$} & & \textbf{$m$} &  & \textbf{$m$} &  \\
\hline
1  & $O(2)^-_j$                      & 16 & $\mathbb{O}^+_j$ & 31 & $O(2)^-_j,\mathbb O^-_j,\mathbb{I}_j$ & 46 & \\
2  &                                 & 17 & $O(2)^-_j,\mathbb O_j,\mathbb O^-_j$ & 32 &                                 & 47 & $O(2)^-_j,\mathbb O_j,\mathbb{I}_j$ \\
3  & $O(2)^-_j,\mathbb O^-_j$         & 18 &                  & 33 & $O(2)^-_j,\mathbb O_j,\mathbb O^-_j,\mathbb{I}_j$ & 48 & \\
4  &                                 & 19 & $O(2)^-_j,\mathbb O_j$ & 34 & $\mathbb{O}^+_j$ & 49 & $O(2)^-_j,\mathbb{I}_j$ \\
5  & $O(2)^-_j$                      & 20 & $\mathbb{O}^+_j$ & 35 & $O(2)^-_j,\mathbb O^-_j,\mathbb{I}_j$ & 50 & \\
6  & $\mathbb{O}^+_j$                & 21 & $O(2)^-_j,\mathbb{I}_j$ & 36 & $\mathbb{O}^+_j$ & 51 & $O(2)^-_j,\mathbb O^-_j$ \\
7  & $O(2)^-_j,\mathbb O^-_j$         & 22 &                  & 37 & $O(2)^-_j,\mathbb O_j,\mathbb O^-_j,\mathbb{I}_j$ & 52 & \\
8  &                                 & 23 & $O(2)^-_j,\mathbb O_j$ & 38 & $\mathbb{O}^+_j$ & 53 & $O(2)^-_j,\mathbb{I}_j$ \\
9  & $O(2)^-_j,\mathbb O_j,\mathbb O^-_j$ & 24 &                  & 39 & $O(2)^-_j,\mathbb O_j,\mathbb{I}_j$ & 54 & $\mathbb{O}^+_j$ \\
10 & $\mathbb{O}^+_j$                & 25 & $O(2)^-_j,\mathbb{I}_j$ & 40 & $\mathbb{O}^+_j$ & 55 & $O(2)^-_j,\mathbb O^-_j$ \\
11 & $O(2)^-_j,\mathbb O^-_j$         & 26 &                  & 41 & $O(2)^-_j,\mathbb O_j,\mathbb O^-_j,\mathbb{I}_j$ & 56 & \\
12 & $\mathbb{O}^+_j$                & 27 & $O(2)^-_j,\mathbb O^-_j,\mathbb{I}_j$ & 42 &                  & 57 & $O(2)^-_j,\mathbb O_j,\mathbb O^-_j$ \\
13 & $O(2)^-_j,\mathbb O_j,\mathbb O^-_j$ & 28 &                  & 43 & $O(2)^-_j,\mathbb O_j,\mathbb{I}_j$ & 58 & $\mathbb{O}^+_j$ \\
14 & $\mathbb{O}^+_j$                & 29 & $O(2)^-_j$       & 44 & $\mathbb{O}^+_j$ & 59 & $O(2)^-_j,\mathbb O^-_j,\mathbb{I}_j$ \\
15 & $O(2)^-_j,\mathbb O_j,\mathbb{I}_j$   & 30 & $\mathbb{O}^+_j$ & 45 & $O(2)^-_j$       & 60 & $\mathbb{O}^+_j$ \\
\hline
\end{tabular}
\caption{Maximal non-radial orbit types on $\mathcal W_{m,j}$ for $j \in \{0,1\}$. The pattern is periodic in $m$ with period 60. Blank entries indicate no such orbit types appear for that $m$.}
\label{table:max-orbit-types-sequential}
\end{table}
\FloatBarrier
\section{The $G$-Equivariant Degree}\label{sec:appendix}
\noi{\bf  Equivariant notation.}
Let $G$ be a compact Lie group. For any subgroup  $H \leq G$, we denote by $(H)$ its conjugacy class,
by $N(H)$ its normalizer by $W(H):=N(H)/H$ its Weyl group in $G$. The set of all subgroup conjugacy classes in $G$ 
\[
\Phi(G):=\{(H): H\le G\},
\]
and has a natural partial order defined as follows
\[
(H)\leq (K) \iff \exists_{ g\in G}\;\;gHg^{-1}\leq K.
\]
As is possible with any partially ordered set, we extend the natural order over $\Phi(G)$ to a total order, which we indicate by `$\prec$' to differentiate the two relations. Moreover, we put
\[
\Phi_0 (G):= \{ (H) \in \Phi(G) \; : \; \text{$W(H)$  is finite}\},
\]
and, for any $(H),(K) \in \Phi_0(G)$, we denote by $n(H,K)$ the number of subgroups $\tilde K \leq G$ with $\tilde K \in (K)$ and $H \leq \tilde K$. Given a $G$-space $X$ with an element $x \in X$, we denote by
$G_{x} :=\{g\in G:gx=x\}$ the {\bf isotropy group} of $x$
and we call $(G_{x}) \in \Phi(G)$  the {\bf orbit type} of $x \in X$. We also put 
\begin{align*}
    \begin{cases}
       \Phi(G,X) := \{(H) \in \Phi(G)  : 
(H) = (G_x) \; \text{for some $x \in X$}\}, \\
\Phi_0(G,X):= \Phi(G,X) \cap \Phi_0(G).
    \end{cases}
\end{align*}
Given any subgroup $H\leq G$, we call the subspace 
\[
X^{H} :=\{x\in X:G_{x}\geq H\},
\]
the {\bf $H$-fixed-point subspace} of $X$. If $Y$ is another $G$-space, then a continuous map $f : X \to Y$ is said to be {\bf $G$-equivariant} if $f(gx) = gf(x)$ for each $x \in X$ and $g \in G$.
\vs
\noi{\bf The Burnside ring and Axioms of the $G$-Equivariant Brouwer Degree.}
We call the free $\mathbb{Z}$-module $A(G) := \mathbb{Z}[\Phi_0(G)]$ the {\bf Burnside ring} when it is equipped with the multiplicative operation
\begin{align} \label{def:burnside_product}
    (H) \cdot (K) := \sum\limits_{(L) \in \Phi_0(G)} n_L(L), \quad (H),(K) \in \Phi_0(G), 
\end{align}
where the coefficients $n_L \in \mathbb{Z}$ are given by the recurrence formula
\begin{align} \label{def:recurrence_formula_coefficients_burnside_product}
    n_L := \frac{n(L,H) |W(H)| n(L,K) |W(K)| - \sum_{(\tilde L) \succ (L)} n_{\tilde L} n(L,\tilde L) |W(\tilde L)|}{|W(L)|}.
\end{align}
Since every Burnside ring element $a \in A(G)$ can be expressed as a formal sum over some finite number of generator elements  
\[
a = n_{H_1}(H_1) + n_{H_2}(H_2) + \cdots + n_{H_N}(H_N),
\]
we can use the notation
\[
\operatorname{coeff}^H: A(G) \rightarrow \bz, \quad
\operatorname{coeff}^H(a) := n_H,
\]
to specify the integer coefficient standing next to the generator element $(H) \in \Phi_0(G)$.
\vs
Let $V$ be an orthogonal $G$-representation and suppose that $\Om \subset V$ is an open bounded $G$-invariant set. A $G$-equivariant map $f:V \rightarrow V$ is said to be $\Om$-admissible if $f(x) \neq 0$ for all $x \in \partial \Om$, in which case the pair $(f,\Om)$ is called an {\bf admissible $G$-pair} in $V$. We denote by $\mathcal M^{G}(V)$ the set of all admissible $G$-pairs in $V$ and by $\mathcal{M}^{G}$ the set of all admissible $G$-pairs defined by taking a union over all orthogonal $G$-representations, i.e.
\[
\mathcal M^{G} := \bigcup\limits_V \mathcal M^{G}(V).
\]
The $G$-equivariant Brouwer degree provides an algebraic count of solutions, according to their symmetric properties, to equations of the form
\[
f(x) = 0, \; x \in \Omega,
\]
where $(f, \Omega) \in \mathcal M^{G}$. In fact, it is standard (cf. \cite{AED}, \cite{book-new}) to define the {\bf $G$-equivariant Brouwer degree} as the unique map associating to every admissible $G$-pair $(f,\Om)\in \mathcal M^{G}$ an element from the Burnside ring $A(G)$, satisfying the three {\bf degree axioms} of additivity, homotopy and normalization:
\vs
\begin{theorem} \rm
\label{thm:GpropDeg} There exists a unique map $\gdeg:\mathcal{M}
	^{G}\to A(G)$, that assigns to every admissible $G$-pair $(f,\Omega)$ the Burnside ring element
	\begin{equation}
		\label{eq:G-deg0}\gdeg(f,\Omega)=\sum_{(H) \in \Phi_0(G)}%
		{n_{H}(H)},
	\end{equation}
	satisfying the following properties:
\begin{enumerate}[label=($G_\arabic*$)]
\item\label{g1} \textbf{(Additivity)}
For any two  disjoint open $G$-invariant subsets
  $\Omega_{1}$ and $\Omega_{2}$ with
		$f^{-1}(0)\cap\Omega\subset\Omega_{1}\cup\Omega_{2}$, one has
		\begin{align*}
\gdeg(f,\Omega)=\gdeg(f,\Omega_{1})+\gdeg(f,\Omega_{2}).
		\end{align*}
\item\label{g2} \textbf{(Homotopy)}
For any $\Omega$-admissible $G$-homotopy, $h:[0,1]\times V\to V$, one has
\begin{align*}
\gdeg(h_{t},\Omega)=\mathrm{constant}.
\end{align*}
\item\label{g3}\textbf{(Normalization)}
For any open bounded neighborhood of the origin in an orthogonal $G$-representation $V$ with the identity operator $\id:V \rightarrow V$, one has
\begin{align*}
	\gdeg(\id,\Omega)=(G).
\end{align*}
\end{enumerate}
 \vs
The following are additional properties of the map $\gdeg$ which can be derived from the four axiomatic properties defined above (cf. \cite{AED}, \cite{book-new}):		
\begin{enumerate}[label=($G_\arabic*$),start=4]
\item \label{g4} \textbf{(Existence)} If  $n_{H} \neq0$ for some $(H) \in \Phi_0(G)$ in \eqref{eq:G-deg0}, then there
		exists $x\in\Omega$ such that $f(x)=0$ and $(G_{x})\geq(H)$.
		\item \label{g5} {\textbf{(Multiplicativity)}} For any $(f_{1},\Omega
		_{1}),(f_{2},\Omega_{2})\in\mathcal{M} ^{G}$,
		\begin{align*}
			\gdeg(f_{1}\times f_{2},\Omega_{1}\times\Omega_{2})=
		\gdeg(f_{1},\Omega_{1})\cdot \gdeg(f_{2},\Omega_{2}),
		\end{align*}
		where the multiplication `$\cdot$' is taken in the Burnside ring $A(G )$.
		\item \label{g6} \textbf{(Recurrence Formula)} For an admissible $G$-pair
		$(f,\Omega)$, the $G$-degree \eqref{eq:G-deg0} can be computed using the
		following Recurrence Formula:
		\begin{equation}
			\label{eq:RF-0}n_{H}=\frac{\deg(f^{H},\Omega^{H})- \sum_{(K)\succ(H)}{n_{K}\,
					n(H,K)\, \left|  W(K)\right|  }}{\left|  W(H)\right|  },
		\end{equation}
		where $\left|  X\right|  $ stands for the number of elements in the set $X$
		and $\deg(f^{H},\Omega^{H})$ is the Brouwer degree of the map $f^{H}%
		:=f|_{V^{H}}$ on the set $\Omega^{H}\subset V^{H}$.
	\end{enumerate}
\end{theorem}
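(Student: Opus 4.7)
My plan is to prove existence and uniqueness separately: first I construct $\gdeg$ explicitly by treating the Recurrence Formula \eqref{eq:RF-0} as a definition, and then I verify that any alternative map satisfying the three axioms must agree with this construction on every admissible $G$-pair.

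For existence, I observe that for each admissible $G$-pair $(f,\Omega) \in \mathcal{M}^{G}(V)$ and each orbit type $(H) \in \Phi_0(G)$, the restriction $f^{H} := f|_{V^{H}}$ is $W(H)$-equivariant on the open bounded set $\Omega^{H} := \Omega \cap V^{H}$ and does not vanish on $\partial \Omega^{H} \subset \partial \Omega$, so the classical Brouwer degree $\deg(f^{H}, \Omega^{H})$ is well defined (since $V^{H}$ is finite-dimensional and $W(H)$ is finite). Only finitely many orbit types appear in $\overline{\Omega}$, so I enumerate them as $(H_{1}) \succ (H_{2}) \succ \cdots \succ (H_{N})$ along a linear extension of the partial order and define $n_{H_{k}} \in \mathbb{Q}$ recursively by \eqref{eq:RF-0}. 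The first serious obstacle is the integrality of each $n_{H_{k}}$; I would resolve this by observing that $W(H)$ acts freely on the stratum $V^{H} \setminus \bigcup_{(K) \succ (H)} V^{K}$, so a generic $W(H)$-equivariant perturbation of $f^{H}$ has zeros on this stratum organized into free $W(H)$-orbits, each contributing a multiple of $|W(H)|$ to $\deg(f^{H},\Omega^{H})$, while the combinatorial terms $n(H,K)|W(K)|$ account precisely for contributions pulled in from higher strata. Setting $\gdeg(f,\Omega) := \sum_{k} n_{H_{k}} (H_{k})$ then yields a well-defined element of $A(G)$.

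The three axioms follow routinely from this construction. Additivity (G_1) and Homotopy (G_2) propagate through the recurrence because each $n_{H}$ is a $\mathbb{Z}$-linear combination of the Brouwer degrees $\deg(f^{K},\Omega^{K})$ for $(K) \geq (H)$, and the classical Brouwer degree satisfies the analogous axioms on each $V^{K}$. For Normalization (G_3) with $f = \id$, one has $\deg(\id|_{V^{H}}, \Omega^{H}) = 1$ whenever $0 \in \Omega^{H}$; a downward induction on $\succ$ starting at $(H) = (G)$ then forces $n_{G} = 1$ and $n_{H} = 0$ for $(H) \neq (G)$, giving precisely $(G) \in A(G)$.

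For uniqueness, let $d : \mathcal{M}^{G} \to A(G)$ be another map satisfying (G_1)--(G_3), and write $d(f,\Omega) = \sum m_{H}(H)$. I would show $m_{H} = n_{H}$ by downward induction on $\Phi_0(G,\overline{\Omega})$. The central tool is the slice theorem: using Homotopy I deform $(f,\Omega)$ admissibly to a normal form whose zero set is a finite disjoint union of $G$-orbits of prescribed isotropy, each sitting in a $G$-invariant tubular neighborhood diffeomorphic to $G \times_{H} S$ for some $H$-slice $S$. Additivity localizes the computation to these tubes, and on each tube Normalization together with an equivariant linearization determines the contribution. Comparing the resulting coefficients on each fixed-point subspace $V^{H}$ with the classical Brouwer degree $\deg(f^{H},\Omega^{H})$ recovers exactly the recurrence \eqref{eq:RF-0}; inverting that triangular linear system forces $m_{H} = n_{H}$.

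The main obstacle is carrying out the slice-theoretic reduction to normal form while preserving $\Omega$-admissibility, together with the integrality argument for the recurrence coefficients in the existence step. Both are technical but standard pieces of equivariant transversality theory, and I would invoke the detailed treatments in \cite{AED, book-new} rather than reproducing them in full here.
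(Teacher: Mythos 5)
You should first note that the paper itself does not prove Theorem \ref{thm:GpropDeg}: it is quoted as standard background, with construction and properties referred to \cite{AED} and \cite{book-new}. So your decision to defer the equivariant-transversality machinery to those monographs is consistent with the paper's own treatment, and your existence sketch (define the coefficients by the Recurrence Formula \eqref{eq:RF-0}, check the axioms stratum by stratum) follows the standard route. Two caveats on that half: the integrality of the $n_H$ and the claim that the terms $n(H,K)\,|W(K)|$ ``account precisely for contributions pulled in from higher strata'' are not a routine genericity statement --- equivariant transversality is obstructed in general, which is exactly why the cited construction works with regular \emph{normal} approximations and an induction over orbit types; this is the real content of the existence proof, and you are asserting it rather than proving it (acceptable at sketch level, since the paper defers it too).

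The genuine gap is in your uniqueness argument. The step ``on each tube Normalization together with an equivariant linearization determines the contribution'' cannot be carried out: Normalization only prescribes the degree of the identity, i.e.\ it normalizes the contribution of a zero orbit with full isotropy $G$, and says nothing about the local model $G\times_H S$ around a zero orbit of type $(H)\neq (G)$. In fact no argument can derive uniqueness from Additivity, Homotopy and Normalization alone: the assignment $d(f,\Omega):=\deg(f^{G},\Omega^{G})\cdot (G)$ (classical Brouwer degree of the restriction to the $G$-fixed subspace, with the usual convention when $V^{G}=\{0\}$) satisfies all three axioms for every orthogonal $G$-representation --- and is even multiplicative, since $(G)$ is the unit of $A(G)$ --- yet it differs from $\gdeg$ whenever some coefficient $n_H$ with $(H)\neq(G)$ is nonzero; e.g.\ for $G=\bz_2$ acting antipodally on $V=\br$ one has $\gdeg(-\id,B(V))=(\bz_2)-(\bz_1)$ while $d(-\id,B(V))=(\bz_2)$. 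Uniqueness therefore needs an additional input pinning down the local contribution of every orbit type; in this framework that role is played by the Recurrence Formula \eqref{eq:RF-0} itself (equivalently, by normalizing the degree on regular normal maps for each stratum), which is how the uniqueness statement is to be understood in \cite{AED, book-new}. Since your existence step already \emph{defines} the degree by \eqref{eq:RF-0}, the honest statement is that \eqref{eq:RF-0} determines all coefficients from the classical degrees $\deg(f^{H},\Omega^{H})$, hence uniqueness is immediate for maps satisfying that formula; your slice-theoretic deformation should instead be aimed at verifying the axioms for this construction, not at extracting uniqueness from $(G_1)$--$(G_3)$ alone.
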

The natural generalization of the $G$-equivariant Brouwer degree to its infinite dimensional counterpart the $G$-equivariant Leray-Schauder degree is described in detail elsewhere (see for example \cite{survey, book-new, AED}).
\vs
\noi{\bf Computational Formulae for the $G$-Equivariant Brouwer Degree.} 
We denote by $\{ \mathcal V_i \}_{i \in \mathbb{N}}$ the set of all irreducible $G$-representations and define the $i$-th basic degree as follows
\begin{align*}
\deg_{\mathcal{V}_{i}}:=\gdeg(-\id,B(\mathcal{V} _{i})).
\end{align*} 
Given any orthogonal $G$-representation with a $G$-isotypic decomposition
\[
V = \bigoplus_{i \in \mathbb{N}} V_i,
\]
and any $G$-equivariant  linear isomorphism $T:V\to V$, the Multiplicativity and Homotopy properties of the $G$-equivariant Brouwer degree, together with Schur's Lemma implies
\begin{align*}
  \gdeg(T,B(V))=\prod_{i \in \mathbb{N}} \gdeg
	(T_{i},B(V_{i}))= \prod_{i \in \mathbb{N}}\prod_{\mu\in\sigma_{-}(T)} \left(
	\deg_{\mathcal{V} _{i}}\right)  ^{m_{i}(\mu)}%
\end{align*}
where $T_{i}=T|_{V_{i}}$, $\sigma_{-}(T)$ denotes the real negative
spectrum of $T$ and $m_i(\mu) := \dim E_i(\mu)/ \dim \mathcal W_i$ (here, we indicate by
$E(\mu)$ the generalized eigenspace associated with any $\mu \in \sigma(T)$ and $E_i(\mu) := E(\mu) \cap V_i$).
\vskip.3cm
Notice that each of the basic degrees: 
\begin{align*}
	\deg_{\mathcal{V} _{i}}=\sum_{(H)}n_{H}(H),
\end{align*}
can be practically computed, using the recurrence formula  \eqref{eq:RF-0}, as follows
\begin{align}\label{eq:RF_bd}
n_{H}=\frac{(-1)^{\dim\mathcal{V} _{i}^{H}}- \sum_{(H)\prec(K)}{n_{K}\, n(H,K)\, \left|  W(K)\right|  }}{\left|  W(H)\right|  }.
\end{align}

\end{document}